\documentclass[11 pt]{amsart}
\usepackage{color}
\addtolength{\oddsidemargin}{-.125in}
	\addtolength{\evensidemargin}{-.125in}
	\addtolength{\textwidth}{.25in}

	\addtolength{\topmargin}{-.125in}
	\addtolength{\textheight}{.25in}

\usepackage{verbatim}
\usepackage{amssymb}
\usepackage{amscd}
\usepackage{amsmath}
\usepackage{amsthm}
\usepackage{eucal}
\usepackage{setspace}
\usepackage{url}
\usepackage[utf8]{inputenc}
\usepackage{booktabs}

\allowdisplaybreaks
\nonfrenchspacing
\newcommand{\Tr}{\mathrm{Tr}}

\renewcommand{\Im}{\mathrm{Im}}

\newcommand{\Sp}{\mathrm{Sp}}
\renewcommand{\H}{\mathbb H}
\newcommand{\T}[1]{{}^t{{#1}}}
\newcommand{\field}{K}
\newcommand{\A}{{\mathbb A}}
\renewcommand{\a}{{\mathfrak a}}
\newcommand{\Q}{{\mathbb Q}}
\newcommand{\Z}{{\mathbb Z}}
\newcommand{\R}{{\mathbb R}}
\newcommand{\C}{{\mathbb C}}
\newcommand{\bs}{\backslash}

\newcommand{\GL}{{\rm GL}}
\newcommand{\PGL}{{\rm PGL}}
\newcommand{\SL}{{\rm SL}}

\newcommand{\bc}{{\rm BC}}

\newcommand{\GSp}{{\rm GSp}}
\newcommand{\PGSp}{{\rm PGSp}}

\newcommand{\St}{{\rm St}}

\newcommand{\sq}{\mathfrak{S}}

\newcommand{\disc}{{\rm disc}}

\DeclareMathOperator{\Cl}{Cl}
\newcommand{\mat}[4]{{\setlength{\arraycolsep}{0.5mm}\left(\begin{array}{cc}#1&#2\\#3&#4\end{array}\right)}}

\newcommand{\forget}[1]{}

\newtheorem{lemma}{Lemma}[section]
\newtheorem{theorem}{Theorem}
\newtheorem*{thm}{Theorem}

\newtheorem{proposition}[lemma]{Proposition}

\theoremstyle{remark}
\newtheorem{remark}[lemma]{Remark}

\begin{document}

\bibliographystyle{plain}

\title[Yoshida lifts and simultaneous non-vanishing]{Yoshida lifts and simultaneous non-vanishing of dihedral twists of modular L-functions}

\author{Abhishek Saha}
\address{Departments of Mathematics \\
  University of Bristol\\
  Bristol BS81SN \\
  UK} \email{abhishek.saha@bris.ac.uk}

\author{Ralf Schmidt}
\address{Department of Mathematics
\\ University of Oklahoma\\ Norman\\
   OK 73019, USA}
\email{rschmidt@math.ou.edu}

\begin{abstract}
 Given elliptic modular forms $f$ and $g$ satisfying certain conditions on their weights and levels, we prove (a quantitative version of the statement) that there exist infinitely many imaginary quadratic fields $K$ and characters $\chi$ of the ideal class group ${\rm Cl}_K$ such that $L(\frac12,\bc_{K}(f)\times\chi)\neq0$ and $L(\frac12,\bc_{K}(g)\times\chi)\neq0$. The proof is based on a non-vanishing result for Fourier coefficients of Siegel modular forms combined with the theory of Yoshida liftings.
\end{abstract}

 \maketitle

\let\thefootnote\relax\footnotetext{2010 MSC: 11F30, 11F70, 11F46, 11F66}

\section{Introduction}
Let $K$ be an  imaginary quadratic field of discriminant $-d$. We denote its ideal class group by ${\Cl_K}$ and the  group of ideal class characters by $\widehat{\Cl_K}$. For any $\chi$ in $\widehat{\Cl_K}$ and $f$ a holomorphic newform with trivial nebentypus as in~\cite{at-le}, one can form the $L$-function $L(s, \pi_f \times \theta_\chi)$; this is the Rankin-Selberg convolution of the automorphic representation $\pi_f$ attached to $f$ and the $\theta$-series $$\theta_\chi(z) = \sum_{0 \ne \a \subset O_K}\chi(\a) e(N(\a)z).$$
Here, $\theta_\chi$ is a holomorphic modular form of weight $1$ and nebentypus $\left(\frac{-d}{*} \right)$ on $\Gamma_0(d)$; it is a cusp form if and only if $\chi^2 \ne 1$. We remark here that $L(s, \pi_f \times \theta_\chi) = L(s, \bc_{K}(\pi_f) \times \chi) $ where $\bc_{K}$ denotes base-change to $K$.

The problem of studying the non-vanishing of the central values $L(\frac12, \pi_f \times \theta_\chi)$ arises naturally in several contexts, and a considerable amount of work has been done in this direction. We note in particular the paper of Michel and Venkatesh~\cite{micven07} which proves that given a cusp form $f$ (satisfying certain conditions on weight and level) and an imaginary quadratic field $K$ of discriminant $-d$, there exist asymptotically at least $d^{\frac{1}{2700} - \epsilon}$ characters $\chi \in \widehat{\Cl_K}$ such that $L(\frac12, \pi_f \times \theta_\chi) \ne 0$. The introduction to~\cite{micven07} has a review of several other papers on related questions and a summary of the methods available.

In this paper, we prove a \emph{simultaneous} non-vanishing result for $L(\frac12, \pi_f \times \theta_\chi)$, $L(\frac12, \pi_g \times \theta_\chi) $ for two fixed forms $f$, $g$ (but varying $K$ and $\chi$) under certain hypotheses.

\begin{theorem}\label{th:simulnonvan}Let $k>1$ be an odd integer. Let $N_1$, $N_2$ be two positive, squarefree integers such that $M = \gcd(N_1, N_2)>1$. Let $f$ be a holomorphic newform of weight $2k$ on $\Gamma_0(N_1)$ and $g$ be a holomorphic newform of weight $2$ on $\Gamma_0(N_2)$. Assume that for all primes $p$ dividing $M$ the Atkin-Lehner eigenvalues of $f$ and $g$ coincide. Then there exists an imaginary quadratic field $K$ and a character $\chi \in  \widehat{\Cl_K}$ such that $L(\frac12, \pi_f \times \theta_\chi) \ne 0$ and $L(\frac12, \pi_g \times \theta_\chi) \ne 0$. In fact, if $D(f,g)$ is the set of $d$ satisfying the following conditions:

 \begin{enumerate}
 \item $d>0$ is an odd, squarefree integer and $-d$ is a fundamental discriminant,
  \item \label{part 2} There exists an ideal class group character $\chi$ of $K=\Q(\sqrt{-d})$ such that  $L(\frac12, \pi_f \times \theta_\chi) \neq 0$ and $L(\frac12, \pi_g \times \theta_\chi) \neq 0$,
 \end{enumerate}

then, for any $0<\delta <\frac58$, one has the lower bound \footnote{Recall that $A(X) \gg_{a,b,..} B(X)$ means that there exist constants $C>0$, $D>0$, which depend only on $a,b,..$, such that $A(X) > C |B(X)|$ for all $X>D$.}
\begin{equation} \label{bounddelta}|\{0<d<X, \ d\in  D(f,g) \}| \gg_{f,g,\delta} X^\delta.\end{equation}

\end{theorem}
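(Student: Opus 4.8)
The plan is to realize the product of $L$-values $L(\tfrac12,\pi_f\times\theta_\chi)\,L(\tfrac12,\pi_g\times\theta_\chi)$ as (essentially) the square of a Fourier coefficient of a Siegel modular form, and then to deduce simultaneous non-vanishing from a non-vanishing statement for such Fourier coefficients as $d$ varies. More precisely, I would first use the theory of Yoshida liftings: since $f$ has weight $2k$, $g$ has weight $2$, both have squarefree level, and the Atkin--Lehner eigenvalues match at the primes dividing $M=\gcd(N_1,N_2)>1$, the pair $(f,g)$ gives rise to a holomorphic Yoshida lift $F$, a Siegel cusp form of degree $2$, scalar weight depending on $k$ (and some paramodular or Siegel congruence level built from $N_1,N_2$), whose spin $L$-function factors as $L(s,\pi_f)L(s,\pi_g)$ up to normalization. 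The condition $k$ odd and $k>1$ is exactly what is needed for the archimedean component to admit a holomorphic (rather than generic) Yoshida lift of the relevant weight.

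The second ingredient is a Waldspurger/B\"ocherer-type identity relating the Fourier coefficients of $F$ indexed by positive definite half-integral matrices of discriminant $-d$ to the central $L$-value $L(\tfrac12,F\times\theta_\chi) = L(\tfrac12,\pi_f\times\theta_\chi)\,L(\tfrac12,\pi_g\times\theta_\chi)$, summed against ideal class characters $\chi$ of $K=\Q(\sqrt{-d})$. Concretely, for a suitable choice of $F$ in the Yoshida packet, $\sum_{\chi\in\widehat{\Cl_K}} |a_F(T_\chi)|^2$ (or a single well-chosen Fourier coefficient) is a nonzero constant times $\sum_\chi L(\tfrac12,\pi_f\times\theta_\chi)L(\tfrac12,\pi_g\times\theta_\chi)$ times elementary factors. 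Hence if some Fourier coefficient of $F$ attached to discriminant $-d$ is nonzero, then at least one $\chi$ contributes a nonzero product, which is precisely condition \eqref{part 2} in the definition of $D(f,g)$. So it suffices to show that $F$ has a nonvanishing Fourier coefficient of discriminant $-d$ for $\gg_{f,g,\delta} X^\delta$ values of odd squarefree $d<X$ with $-d$ fundamental.

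The third step is the non-vanishing result for Fourier coefficients of Siegel cusp forms of degree $2$ alluded to in the abstract (and, I expect, proved as a separate proposition earlier or later in the paper): a nonzero Siegel cusp form of degree $2$ and weight $\ell$ has infinitely many nonvanishing Fourier coefficients indexed by matrices whose discriminant $-d$ ranges over fundamental discriminants, with a quantitative count $\gg X^\delta$ for any $\delta<5/8$. The exponent $5/8$ strongly suggests this is proved via the relation between Fourier coefficients and Fourier--Jacobi coefficients together with bounds on the $1/2$-integral weight (or Jacobi) forms — the $5/8$ is reminiscent of subconvexity-type or Duke-style inputs, and the restriction to one auxiliary prime (from $M>1$) is what makes the local conditions at that prime consistent so that the relevant matrices are not forced to be imprimitive. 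I would then simply quote this proposition, applied to $F$, and combine it with the previous two steps; one must check that the finitely many ``bad'' primes (those dividing $N_1N_2$, the level of $F$, and $2$) only remove a density-zero set of $d$, so the lower bound survives.

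The main obstacle, I expect, is the non-vanishing statement for Fourier coefficients itself — getting the quantitative exponent $\delta<5/8$ rather than merely infinitude — and, secondarily, pinning down the precise Yoshida lift and the explicit B\"ocherer-type formula with all the local factors at the ramified primes so that non-vanishing of a Fourier coefficient genuinely forces non-vanishing of the product $L$-value for \emph{some} $\chi$ (as opposed to only the sum over $\chi$ being nonzero, which would already suffice, so this is really a bookkeeping point). The matching of Atkin--Lehner signs is what guarantees the Yoshida lift exists and is nonzero; without it the whole construction collapses, so verifying nonvanishing of $F$ is a prerequisite I would handle first by citing the relevant results on Yoshida liftings.
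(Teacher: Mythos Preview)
Your outline is essentially the paper's strategy: construct the (scalar-valued, weight $k+1$) Yoshida lift $F_{f,g}\in S^{(2)}_{k+1}(N)$ with $N=\mathrm{lcm}(N_1,N_2)$, apply the non-vanishing theorem for fundamental Fourier coefficients (which the paper proves separately as Theorem~\ref{th:nonvanfouriersiegel}, via the Fourier--Jacobi expansion and half-integral weight input, exactly as you guess), and deduce from a nonzero Fourier coefficient at discriminant $-d$ that some $\chi\in\widehat{\Cl_K}$ gives simultaneously nonvanishing $L$-values.

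The one point where the paper differs from your description is the bridge between Fourier coefficients and $L$-values. You invoke a B\"ocherer-type identity $|B_\chi|^2 = (\text{local factors})\cdot L(\tfrac12,\pi_f\times\theta_\chi)L(\tfrac12,\pi_g\times\theta_\chi)$ and worry about controlling the local factors at ramified primes. The paper avoids this entirely: it does not use (and at the time could not use) a precise B\"ocherer formula. Instead, it first observes (Proposition~\ref{classicalbesselperiodprop}) that the Bessel period $B_{S,\chi,\psi}(\Phi_F)$ equals $\sum_{c\in\Cl_K}\chi(t_c)^{-1}a(F,S_c)$ up to a nonzero constant, so a nonzero Fourier coefficient at discriminant $-d$ forces $B_{S,\chi,\psi}\neq 0$ for some $\chi$ by orthogonality. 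Then it applies the theorem of Prasad and Takloo--Bighash \cite[Thm.~3]{prasadbighash}: since $\Pi_{f,g}$ is by construction a theta lift from ${\rm GSO}(D_\A)$, the Bessel period pulls back to a product of two toric periods on $\pi'_f$ and $\pi'_g$, and Waldspurger's formula applied to each factor separately gives nonvanishing of both $L(\tfrac12,\pi_f\times\theta_{\chi^{-1}})$ and $L(\tfrac12,\pi_g\times\theta_{\chi^{-1}})$. This one-directional implication sidesteps all local-factor bookkeeping, which is the advantage of exploiting the theta-lift structure of $F_{f,g}$ rather than treating it as a generic Siegel form. A small correction: the lift is with respect to the Siegel congruence subgroup $\Gamma_0^{(2)}(N)$, not paramodular; the paper notes explicitly that holomorphic Yoshida lifts never admit paramodular vectors.
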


 Note that if we could show that for some $K$, the trivial character is a suitable choice for $\chi$, we would solve the long-standing problem of showing the existence of a quadratic Dirichlet character whose twist with the central $L$-values of $f$, $g$ are simultaneously non-zero. However, because of the nature of our method, we cannot get any handle on the $\chi$ that are good for our purposes, nor can we give any quantitative bounds on how many such $\chi$ exist for each $K$.

 Our method involves Siegel modular forms, Jacobi forms, and classical holomorphic forms of half-integral weight. First, we lift the pair $(f,g)$ via the theta correspondence to a Siegel cusp form of degree 2 and level $N$. Such lifts are traditionally known as Yoshida lifts (after Hiroyuki Yoshida who first investigated such forms in~\cite{yosh1980}) and have been studied extensively by B{\"o}cherer and Schulze-Pillot~\cite{bocsch, bocsch1991, bocsch1994, bocsch1997}. In fact, the Yoshida lift is a certain case of Langlands functoriality; see Sect.~\ref{yoshidarepsec} for more details.

Via the ``pullback" of Bessel periods from \cite[Theorem 3]{prasadbighash} and the formula of Waldspurger, Theorem~\ref{th:simulnonvan} reduces to showing that the Yoshida lift attached to $(f,g)$ has many non-vanishing Fourier coefficients of fundamental discriminant. This turns out to be a special case of the other main result of this paper, Theorem~\ref{th:nonvanfouriersiegel}, which asserts that any Siegel cusp form of degree 2 and squarefree level that is an eigenfunction of certain Hecke operators has many non-zero fundamental Fourier coefficients. The proof of Theorem~\ref{th:nonvanfouriersiegel} --- which exploits the Fourier-Jacobi expansion of Siegel forms and the relation between Jacobi forms and holomorphic modular forms of half-integral weight --- involves only minor modifications to the proof of the main result of~\cite{sahafund}, where a version of this theorem in the case of full level was proved.

After some basic facts and definitions, Theorem~\ref{th:nonvanfouriersiegel} is stated in Sect.\ \ref{nonvanishingsec} below. We explain how its proof follows from a statement about half-integral weight modular forms, and continue to prove this half-integral weight result in Sect.\ \ref{halfintegralsec}. Then, in Sect.\ \ref{yoshidasec}, we turn to Yoshida liftings, starting with some general facts on the relationship between Siegel modular forms of degree $2$ and automorphic representations of the group $\GSp_4$. This is followed by a brief survey of the representation-theoretic construction of the Yoshida lifting due to Roberts \cite{rob2001}. Combined with some local results about representations of the non-archimedean $\GSp_4$, we explain how this leads to an alternative proof of the existence of the classical Yoshida liftings constructed in \cite{bocsch1991, bocsch1997}. The alternative proof comes with a few additional benefits, which will be used in Sect.\ \ref{besselsec}. We start this final section by giving some background on Bessel models and their relationship with Fourier coefficients. Finally, combining Theorem~\ref{th:nonvanfouriersiegel}, Yoshida liftings and \cite[Thm.\ 3]{prasadbighash}, we prove Theorem \ref{th:simulnonvan}. Note that while our method gives a lower bound on the number of non-vanishing twists, it does not give a lower bound on the size of the non-vanishing $L$-value itself.

We say a few words about the restrictions on $f$ and $g$ in Theorem~\ref{th:simulnonvan}. The conditions that $N_1, N_2$ are squarefree and that the Atkin-Lehner eigenvalues of $f$ and $g$ coincide are needed to ensure that there exists a holomorphic Yoshida lift attached to $(f,g)$ with respect to a Siegel-type congruence subgroup $\Gamma_0^{(2)}(N) \subset \Sp_4(\Z)$ of squarefree level $N$. Indeed, our key result (Theorem~\ref{th:nonvanfouriersiegel}) on non-vanishing fundamental Fourier coefficients is only proved for Siegel cusp forms with respect to such congruence subgroups. However, even if these two conditions are removed, $(f,g)$ will still have a Yoshida lift (possibly with respect to some other congruence subgroup) provided that there is a prime $p$ dividing $\gcd(N_1, N_2)$ such that $\pi_f$, $\pi_g$ are both discrete series at $p$.\footnote{The restriction that there is a prime dividing $\gcd(N_1, N_2)$ where $\pi_f$, $\pi_g$ are both discrete series will probably be very difficult to remove by our method, because without this condition there are no Jacquet-Langlands transfers and hence no (holomorphic) Yoshida lifts. It is conceivable that one could still consider the ``Fourier coefficients" of the non-holomorphic Yoshida lift in this setup and prove a non-vanishing result for those.} So, an analogue of Theorem~\ref{th:nonvanfouriersiegel} for Siegel cusp forms with respect to more general congruence subgroups will allow us to remove some of the restrictions on $f$ and $g$. This is currently work in progress by J. Marzec at the University of Bristol. To remove the restriction on the weight of $g$ would require us to extend Theorem~\ref{th:nonvanfouriersiegel} to vector valued Siegel cusp forms, which seems possible in principle.

A word about the exponent $\frac58$ in Theorem~\ref{th:simulnonvan}. Let $\theta$ be a real number such that given any $\epsilon>0$ and a cusp form $f$ of weight $k + \frac12$ with $k\ge1$ we have $\widetilde{a}(f,n) \ll_{f, \epsilon} n^{\theta + \epsilon}$; here $n$ varies over squarefree integers coprime to the level and $\widetilde{a}(f,n)$ denotes the normalized Fourier coefficient. Then what we really prove is that~\eqref{bounddelta} is valid for any $0 < \delta < 1-2\theta$. The first non-trivial bound for $\theta$ ($= \frac{3}{14}$) was obtained by Iwaniec~\cite{iwanfourier}.  In Theorem~\ref{th:simulnonvan} we have used $\theta = \frac{3}{16}$ which is due to Bykovski{\u\i}~\cite{byko}; see also the papers of Blomer--Harcos~\cite{blomer-harcos08} and Conrey--Iwaniec~\cite{conrey-iwaniec}.

As for related work, we have already mentioned the paper of Michel and Venkatesh~\cite{micven07}. The more general problem of non-vanishing of twists of automorphic $L$-functions has a long history. The book of Murty--Murty~\cite{murty-murty}, which brings together some of the main techniques and results in the area, is a good reference; see also the introduction to Ono--Skinner~\cite{ono-skinner}. There are only a few simultaneous non-vanishing results available in the literature. An interesting example is the result of Michel and Vanderkam~\cite{micvan} where families of \emph{three} different $\GL_1 \times \GL_2$ $L$-functions are considered. Closely related to the present work is a paper of Prasad and Takloo-Bighash on Bessel models where a similar non-vanishing result is proved~\cite[Corollary 13.3]{prasadbighash}; however, in their result, the twisting character $\chi$ can be any Hecke character of $K$ (of possible high conductor) and it does not seem possible to give an \emph{effective} bound on this conductor in terms of $f$, $g$ by their method. For many arithmetic applications, it is necessary to know the existence of non-vanishing twists by characters whose conductor can be effective bounded, and the ideal scenario is when an unramified twist exists, as in Theorem~\ref{th:simulnonvan}.

\section*{Acknowledgements} This paper grew out of a suggestion made by Ramin Takloo-Bighash to the first author. We thank him for that, as well as for his encouragement and feedback. We also thank Dinakar Ramakrishnan for some helpful suggestions.

\section{Siegel cusp forms of degree 2}\label{siegelsection}

\subsection{Preliminaries}For any commutative ring $R$ and positive integer $n$, let $M_n(R)$
  denote the ring of $n$ by $n$ matrices with entries in $R$ and $\GL_n(R)$ denote the group of invertible matrices.  If
  $A\in M_n(R)$, we let $\T{A}$ denote its transpose. We let $M_n^{\rm sym}(R)$ denote the additive group of symmetric matrices in  $M_n(R)$. We say that a matrix in $M_n^{\rm sym}(\Z)$ is semi-integral if
  it has integral diagonal entries and half-integral off-diagonal
  ones. We let $\Lambda_n \subset M_n^{\rm sym}(\Z)$ denote the set of symmetric, semi-integral, positive-definite matrices of
size $n$.

Denote by $J$ the $4$ by $4$ matrix given by
$$
J =
\begin{pmatrix}
0 & I_2\\
-I_2 & 0\\
\end{pmatrix}.
$$ where $I_2$ is the identity matrix of size 2. Define the algebraic groups
   $\GSp_4$ and $\Sp_4$ over $\Z$ by
$$\GSp_4(R) = \{g \in \GL_4(R) \; | \; \T{g}Jg =
  \mu_2(g)J,\:\mu_2(g)\in R^{\times}\},$$
$$
\Sp_4(R) = \{g \in \GSp_4(R) \; | \; \mu_2(g)=1\},
$$
for any commutative ring $R$. The group $\GSp_4$ will be denoted by the letter $G$.

The Siegel upper-half space of degree 2 is defined by
$$
\H_2 = \{ Z \in M_2(\C)\;|\;Z =\T{Z},\ \Im(Z)
  \text{ is positive definite}\}.
$$
We define
$$
 g \langle Z\rangle = (AZ+B)(CZ+D)^{-1}\qquad\text{for }
 g=\begin{pmatrix} A&B\\ C&D \end{pmatrix} \in \Sp_4(\R),\;Z\in \H_2.
$$
We let $J(g,Z) = CZ + D$ and use $i_2$ to denote the point $\begin{pmatrix}i&\\& i \end{pmatrix} \in \H_2$.

For any positive integer $N$,
define
\begin{equation}\label{Gamma0defeq}
\Gamma_0^{(2)}(N) := \left\{\begin{pmatrix}A&B\\ C&D \end{pmatrix} \in \Sp_4(\Z)\;|\;C \equiv 0 \pmod{N}\right\}.
\end{equation}

Let $S_k^{(2)}(N)$ denote the space of holomorphic functions $F$ on
$\H_2$ which satisfy the relation
\begin{equation}\label{siegeldefiningrel}
F(\gamma \langle Z\rangle) = \det(J(\gamma,Z))^k F(Z)
\end{equation}
for $\gamma \in \Gamma_0^{(2)}(N)$, $Z \in \H_2$, and vanish at all the
cusps. Elements of $S_k^{(2)}(N)$ are often referred to as Siegel cusp forms of degree (genus) 2, weight $k$ and level $N$.

\subsection{The Fourier and Fourier-Jacobi expansions}
It is well-known that any $F$ in $S_k^{(2)}(N)$ has a Fourier expansion \begin{equation}\label{siegelfourierexpansion}F(Z)
=\sum_{T \in \Lambda_2} a(F, T) e(\Tr(TZ)).
\end{equation}
Applying~\eqref{siegeldefiningrel} for $\gamma = \begin{pmatrix}A&\\&\T{A}^{-1}\end{pmatrix}$, where $A \in \GL_2(\Z)$, yields the relation \begin{equation}\label{fourierinvariance}a(F, T) = \det(A)^k\,a(F, \T{A}TA) \end{equation} for  $A \in \GL_2(\Z)$. In particular, if $k$ is even, then the Fourier coefficient $a(F, T)$ depends only on the $\GL_2(\Z)$ equivalence class of $T$.

The Fourier expansion~\eqref{siegelfourierexpansion} also immediately shows that any $F \in S_k^{(2)}(N)$ has a ``Fourier--Jacobi expansion"

\begin{equation}\label{fjexpand}F(Z) = \sum_{m > 0} \phi_m(\tau, z) e(m \tau')\end{equation} where we write $Z= \begin{pmatrix}\tau&z\\z&\tau' \end{pmatrix}$ and for each $m>0$,
\begin{equation}\label{jacobifourier}\phi_m(\tau, z) = \sum_{\substack{n,r \in \Z \\ 4nm> r^2}}a \left(F, \mat{n}{r/2}{r/2}{m}\right) e(n \tau) e(r z) \in J_{k,m}^{\text{cusp}}(N). \end{equation} Here $J_{k,m}^{\text{cusp}}(N)$ denotes the space of Jacobi cusp forms of weight $k$, level $N$ and index $m$; for details see~\cite{manickram}. If we put $c(n,r) = a \left(F, \mat{n}{r/2}{r/2}{m}\right)$, then~\eqref{jacobifourier} becomes

$$\phi_m(\tau, z) =  \sum_{\substack{n,r \in \Z \\ 4nm> r^2}} c(n,r) e(n \tau) e(r z),$$ and this is called the Fourier expansion of the Jacobi form $\phi_m$.

\subsection{The $U(p)$ operator} For each prime $p$ dividing $N$, there exists a Hecke operator $U(p)$ acting on the space $S_k^{(2)}(N)$. It can be most simply described by its action on Fourier coefficients,

\begin{equation}\label{upaction}F(Z) = \sum_{T \in \Lambda_2} a(F, T) e(\Tr(TZ)) \mapsto (U(p) F)(Z) =
\sum_{T \in \Lambda_2} a(F, pT) e(\Tr(TZ)).\end{equation}

When $N$ is squarefree, the operator $U(p)$ has been interpreted representation-theoretically in~\cite{sch} (where this operator is called $T_2(p)$). Furthermore, it has been proved by B{\"o}cherer~\cite{boch-up} that $U(p)$ is an invertible operator on the space $S_k^{(2)}(N)$ (we will however not need this fact).

\begin{lemma}\label{nonvanlemma} Let $F \in S_k^{(2)}(N)$ be an eigenfunction for the Hecke operators $U(p)$ for all $p$ dividing $N$. Suppose for some $N_1$ dividing $N$ and some $T \in \Lambda_2$ we have that $a(F, N_1T) \neq 0$. Then $a(F, T) \neq 0$.
\end{lemma}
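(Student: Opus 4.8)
The plan is to read off everything from the action \eqref{upaction} of $U(p)$ on Fourier coefficients. That formula says $a(U(p)F,S)=a(F,pS)$ for every $S\in\Lambda_2$; since multiplication by a positive integer preserves both semi-integrality and positive-definiteness, $pS\in\Lambda_2$ whenever $S\in\Lambda_2$, so the formula is meaningful for all $S$. Hence if $p\mid N$ and $U(p)F=\lambda_p F$, we obtain the scalar identity $a(F,pS)=\lambda_p\,a(F,S)$ for all $S\in\Lambda_2$.

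The heart of the argument is then to peel off one prime at a time. Suppose $N_1\mid N$ with $N_1>1$, and factor $N_1=p\,N_1'$ with $p$ prime; then $p\mid N$ and $N_1'\mid N$. Taking $S=N_1'T$ in the identity above gives $a(F,N_1T)=\lambda_p\,a(F,N_1'T)$, so the hypothesis $a(F,N_1T)\ne0$ forces $\lambda_p\ne0$ and $a(F,N_1'T)\ne0$. (In particular we never need Bölcherer's invertibility theorem for $U(p)$: the non-vanishing of the relevant Hecke eigenvalues is a byproduct rather than an input.) Iterating this --- formally, inducting on the number of prime factors of $N_1$ counted with multiplicity, with the trivial base case $N_1=1$ --- we successively replace $N_1$ by $N_1'$; at each stage the new multiplier still divides $N$, so the next prime to be peeled off again divides $N$ and $U(p)$ is again available. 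After finitely many steps the multiplier is $1$ and we are left with $a(F,T)\ne0$.

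I do not anticipate any real obstacle here: the only points needing care are the entirely routine check that $\Lambda_2$ is stable under $S\mapsto pS$, and the bookkeeping guaranteeing that $U(p)$ is only ever invoked for primes $p\mid N$ --- which is automatic, since every prime divisor of a divisor of $N_1$ divides $N$.
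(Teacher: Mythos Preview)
Your proof is correct and follows essentially the same approach as the paper: both argue by induction on the number of prime factors of $N_1$ (with multiplicity), using the eigenvalue relation $a(F,pS)=\lambda_p\,a(F,S)$ derived from \eqref{upaction} to peel off one prime at a time. Your additional remarks that $\Lambda_2$ is stable under $S\mapsto pS$ and that $\lambda_p\neq0$ is a byproduct are valid refinements but do not alter the argument.
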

\begin{proof} For each fixed $T$, we prove the statement by using induction on the number of primes (counted with multiplicity) dividing $N_1$. The statement is trivially true if $N_1 = 1$. Now let $N_1 >1$ and let $a(F, N_1T) \neq 0$. We need to show that $a(F, T) \neq 0$. Let $p$ be a prime dividing $N_1$ and suppose that $U(p) F = \lambda_p F$; such a $\lambda_p$ exists by our assumption on $F$. By~\eqref{upaction}, this means that $a(F, pS) = \lambda_p a(F,S)$ for all $S \in \Lambda_2$. Applying this fact for $S= (N_1/p)T$ and using the assumption $a(F, N_1T) \neq 0$, we deduce that $a(F, (N_1/p)T) \neq 0$. Now the induction hypothesis shows that $a(F, T) \neq 0$.

\end{proof}

\subsection{Non-vanishing of Fourier coefficients}\label{nonvanishingsec}
Recall that elements $S$ of $\Lambda_2$ are matrices of the form
$$
 S=\mat{a}{b/2}{b/2}{c},\qquad a,b,c\in\Z, \qquad a>0, \qquad \disc(S) := b^2 - 4ac < 0.
$$
If $\gcd(a,b,c)=1$, then $S$ is called \emph{primitive}. If $\disc(S)$ is a fundamental discriminant, then $S$ is called \emph{fundamental}. Observe that if $S$ is fundamental, then it is automatically primitive. Observe also that if $\disc(S)$ is odd, then $S$ is fundamental if and only if $\disc(S)$ is squarefree.

 In an earlier work~\cite{sahafund} of the first author, it was shown that elements of $S_k^{(2)}(1)$ are uniquely determined by almost all of their fundamental Fourier coefficients. We now extend that result to elements of $S_k^{(2)}(N)$ under some assumptions as well as make it quantitative\footnote{Note however, that in the full-level case treated in~\cite{sahafund}, $k$ was allowed to be any integer while here we will restrict to $k$ even.}. In the theorem below, $\sq$ denotes the set of odd squarefree positive integers.

\begin{theorem} \label{th:nonvanfouriersiegel}Let $k>2$ be even and $N$ be a squarefree integer. Let $0 \ne F \in S_k^{(2)}(N)$ be an eigenfunction for the $U(p)$ operator for all primes $p$ dividing $N$. Then, for any $0< \delta < \frac58$, one has the lower bound
$$|\{0 < d < X , \ d \in \sq, \ a(F, S) \ne 0 \text{ for some }  S  \text{ with }  d = -\disc (S)   \}| \gg_{F, \delta} X^\delta. $$

\end{theorem}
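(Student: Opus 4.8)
The plan is to reduce Theorem~\ref{th:nonvanfouriersiegel} to a statement about half-integral weight cusp forms via the Fourier--Jacobi expansion, following the strategy of~\cite{sahafund} with the modifications needed to handle level $N$ and the $U(p)$-eigenfunction hypothesis. First I would pick, from the Fourier--Jacobi expansion~\eqref{fjexpand}, an index $m>0$ with $\phi_m \ne 0$; since $F \ne 0$, such an $m$ exists, and by replacing $m$ by a suitable divisor I can use Lemma~\ref{nonvanlemma} to arrange that whenever a Fourier coefficient $c(n,r)$ of $\phi_m$ is nonzero for some $T = \mat{n}{r/2}{r/2}{m}$, the coefficient $a(F,T')$ for the ``reduced'' $T'$ is also nonzero --- this is precisely where the hypothesis that $F$ is a $U(p)$-eigenfunction for all $p \mid N$ enters, letting us divide out the part of $m$ supported on primes dividing $N$ while preserving non-vanishing. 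The point is that a discriminant $d = -\disc(T) = 4nm - r^2$ that is odd and squarefree is automatically fundamental, so it suffices to exhibit many such $d < X$ for which some $c(n,r) \ne 0$ with $4nm - r^2 = d$.

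Next I would invoke the theta-decomposition of the Jacobi cusp form $\phi_m$: writing $\phi_m(\tau,z) = \sum_{\mu \bmod 2m} h_\mu(\tau)\,\theta_{m,\mu}(\tau,z)$, the components $h_\mu$ assemble into a vector-valued cusp form of weight $k-\tfrac12$, equivalently (via the standard correspondence, e.g.\ as in~\cite{manickram} for the level-$N$ setting) into classical cusp forms of half-integral weight $k-\tfrac12$ on $\Gamma_0(4mN)$ with the appropriate character. The Fourier coefficients of these half-integral weight forms are, up to the dictionary $D = r^2 - 4nm$, exactly the $c(n,r)$; in particular a nonzero $\phi_m$ yields a nonzero half-integral weight cusp form $h$, and a lower bound on the number of squarefree $d < X$ with a nonzero $D = -d$ coefficient of $h$ translates directly into the bound we want for $F$. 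So the theorem reduces to: \emph{a nonzero cusp form of half-integral weight $k - \tfrac12 \ge \tfrac{5}{2}$ on $\Gamma_0(4mN)$ has nonzero Fourier coefficients at $\gg_{h,\delta} X^\delta$ squarefree $n \le X$, for every $\delta < \tfrac58$.}

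To prove that half-integral weight statement I would argue by contradiction and a counting/dimension argument combined with subconvexity-type bounds, exactly as in~\cite{sahafund}. Suppose the nonzero coefficients at squarefree indices were supported on a set $\mathcal{S}$ with $|\mathcal{S} \cap [1,X]| \ll X^\delta$ for some $\delta < \tfrac58$. On the one hand, by the Rankin--Selberg / unfolding method the second moment $\sum_{n \le X,\ n \text{ sqfree}} |\widetilde a(h,n)|^2$ grows like a constant times $X$ (this uses that $h \ne 0$). On the other hand, with the coefficients supported on $\mathcal{S}$ and controlled by the individual bound $\widetilde a(h,n) \ll_{h,\epsilon} n^{\theta + \epsilon}$ with $\theta = \tfrac{3}{16}$ (Bykovski{\u\i}~\cite{byko}), the same sum is $\ll_\epsilon |\mathcal{S} \cap [1,X]| \cdot X^{2\theta + \epsilon} \ll X^{\delta + 2\theta + \epsilon}$; choosing $\delta < 1 - 2\theta = \tfrac58$ forces $X^{\delta + 2\theta + \epsilon} = o(X)$, a contradiction. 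This is the heart of the matter and also where the exponent $\tfrac58$ comes from; it is essentially the argument of~\cite{sahafund}, and the modifications for level are routine (one works on $\Gamma_0(4mN)$ throughout, and the squarefree indices $n$ coprime to the level are the only ones used, which is harmless).

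The step I expect to be the main obstacle --- or at least the one requiring the most care --- is the bookkeeping in the first paragraph: ensuring that after passing to a single Fourier--Jacobi coefficient $\phi_m$ and then to half-integral weight, non-vanishing of a half-integral weight coefficient really does pull back to a nonzero \emph{fundamental} Fourier coefficient $a(F,S)$ of $F$ itself, rather than of some $U(p)$-translate. This is exactly what Lemma~\ref{nonvanlemma} is designed to handle: it lets us strip the ``$N$-part'' off the index $m$, so that the discriminants we produce are coprime to $N$ (hence, being odd and squarefree, fundamental), and the corresponding $T$ genuinely have $a(F,T) \ne 0$. Once this reduction is set up cleanly, the analytic input is a black box and the theorem follows.
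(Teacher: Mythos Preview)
Your overall strategy --- Fourier--Jacobi expansion, pass to half-integral weight, then the second-moment/Bykovski{\u\i} argument --- matches the paper's. But there is a genuine gap in the first reduction step, which the paper closes with two ingredients you do not mention.

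First, Lemma~\ref{nonvanlemma} does not let you ``divide out the $N$-part of $m$'' as you describe. The lemma says that $a(F,N_1T)\neq0$ implies $a(F,T)\neq0$ for $N_1\mid N$; applied to $T_0=\mat{n}{r/2}{r/2}{m}$ with $a(F,T_0)\neq0$, this only reduces the lower-right entry from $m$ to $m/N_1$ when $N_1$ also divides $n$ and $r$, which you have no reason to expect. The paper instead invokes Yamana's theorem \cite[Thm.~2]{yamana09} to produce a \emph{primitive} $T$ and an $N_1\mid N$ with $a(F,N_1T)\neq0$, and only then applies Lemma~\ref{nonvanlemma} to conclude $a(F,T)\neq0$ for that primitive $T$.

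Second, and more seriously, the half-integral weight input (Theorem~\ref{hafintgen}) carries real level restrictions: the level must not be divisible by $16$ or by any odd cube, with a character condition at primes whose square divides it. For an arbitrary index $m$ the level $4mN$ need not satisfy these, and the sieve in the proof of Theorem~\ref{hafintgen} (the construction of the auxiliary forms $g_i$) genuinely uses them --- the ``modifications for level'' are not routine here. The paper resolves this with a further step you omit: starting from the primitive $T$ above, it uses Iwaniec's theorem \cite{iwanprime} that a primitive positive-definite binary quadratic form represents infinitely many primes to find a prime $p\nmid N$ represented by the form attached to $T$, and then a $\GL_2(\Z)$-equivalence moves the lower-right entry of $T$ to $p$. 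This forces the Fourier--Jacobi index to be a prime coprime to $N$, so the resulting scalar form lies in $S_{k-1/2}(4pN)$ with $4pN$ free of odd square factors and not divisible by $16$, and Theorem~\ref{hafintgen} applies directly. Without Yamana's result and Iwaniec's prime-representation theorem, your reduction does not close.
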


\begin{remark} In particular, this implies that for $k, N$ as above, if $F \in S_k^{(2)}(N)$ is non-zero and an eigenfunction for the $U(p)$ operator for all primes $p$ dividing $N$, then there exist infinitely many fundamental $S$ such that $a(F,S) \ne 0$.
\end{remark}

\begin{proof} The proof is very similar to that of Theorem 1 of~\cite{sahafund}. Let $F\in S_k^{(2)}(N)$ be non-zero and an eigenfunction for the $U(p)$ operator for all primes $p$ dividing $N$. A result of Yamana~\cite[Thm. 2]{yamana09} tells us that there exists a \emph{primitive} matrix $T$ and an integer $N_1$ dividing $N$ such that $a(F, N_1T) \neq 0$. Now by Lemma~\ref{nonvanlemma}, it follows that $a(F, T) = 0$.

Since $T$ is primitive, we can write $T= \mat{n}{r/2}{r/2}{m}$ with $\gcd(m,r,n) = 1$ and $4nm> r^2$. By the main theorem of~\cite{iwanprime}, there exist infinitely many primes of the form $mx_0^2 + rx_0y_0 + ny_0^2$. We pick a prime $p$ such that $p \nmid N$ and $p = mx_0^2 + rx_0y_0 + ny_0^2$. Since this implies $\gcd(x_0, y_0) = 1$, we can find integers $x_1$, $y_1$ such that $A= \mat{y_1}{y_0}{x_1}{x_0} \in \SL_2(\Z).$ Let $T' =\,\T\!ATA$. Then $a(F,T) = a(F, T')$ and $T'$ is of the form $\mat{n'}{r'/2}{r'/2}{p}$.

This implies that there is a prime $p$ not dividing $N$ such that the Jacobi form $\phi_p$ in the expansion~\eqref{fjexpand} satisfies $\phi_p \neq 0$. Let us denote $$c(n,r) = a\left(F, \mat{n}{r/2}{r/2}{p}\right).$$ Then the Fourier expansion of $\phi_p$ is given by $$\phi_p(\tau, z) = \sum_{\substack{n,r \in \Z \\ 4np> r^2}} c(n,r) e(n \tau) e(r z).$$ By our assumption $c(n', r') \neq 0$, where $$T' = \mat{n'}{r'/2}{r'/2}{p}.$$
Now, let $$h(\tau) = \sum_{m=1}^\infty  c(m) e(m \tau).$$
where $$c(m) = \sum_{\substack{0 \le \mu \le 2p-1 \\ \mu^2 \equiv -m \pmod{4p}}} c\left((m+\mu^2)/4p, \mu \right).$$
By Theorem 4.8 of~\cite{manickram}, we know that $h \in S_{k-\frac{1}{2}}(4pN)$. Here $S_{k-\frac{1}{2}}(4pN)$ denotes the space of cusp forms of weight $k - \frac12$ for $\Gamma_0(4pN)$; for the basic definitions and properties of such spaces of half-integral forms, see for instance~\cite[Sect.\ 3.1]{sahafund}.

It is easy to see that $h(\tau)$ is not identically equal to 0. Indeed put $d_0 = 4 n' p - r'^2$. Then $c(d_0)$ equals $a\left(F, \mat{n'}{r'/2}{r'/2}{p}\right) + a\left(F, \mat{n'+p-r'}{p - r'/2}{p - r'/2}{p}\right)$, which is simply $2 a\left(F, \mat{n'}{r'/2}{r'/2}{p}\right)$ by~\eqref{fourierinvariance} and hence non-zero.

Now, by Theorem~\ref{hafintgen} below, it follows that $$|\{0 < d<X, \ d \in \sq, \ c(d) \ne 0\}|\gg_{h,\delta} X^\delta. $$ For any of these $d$, there exists a $\mu$ such that  $c\left(\frac{d+\mu^2}{4p},\mu \right) = a \left(F, \mat{\frac{d+\mu^2}{4p}}{\mu/2}{\mu/2}{p}\right)$ is not equal to zero. This completes the proof.

\end{proof}

\subsection{A result on half-integral weight cusp forms}\label{halfintegralsec}
The following theorem, which is a generalization of Theorem 2 of~\cite{sahafund}, was used in the proof above. We refer the reader to~\cite[Sect.\ 3.1]{sahafund} for the notations and definitions related to half-integral weight cusp forms.
\begin{theorem}\label{hafintgen}Let $N$ be a positive integer divisible by $4$ and $\chi : (\Z / N \Z)^\times \rightarrow \C^\times$ be a character. Write $\chi = \prod_{p | N} \chi_p$ and assume that the following conditions are satisfied:

\begin{enumerate}
\item $N$ is not divisible by 16, and if $N$ is divisible by $8$, then $\chi_2 =1$.

 \item   $N$ is not divisible by $p^3$ for any odd prime $p$,
\item If $p$ is an odd prime such that $p^2$ divides $N$, then $\chi_p \ne 1$.

\end{enumerate}
For some $k \ge 2$, let $f \in S_{k+\frac{1}{2}}(N, \chi)$ be non-zero with the Fourier expansion $f(z) = \sum_{n > 0} a(f, n)e(nz).$  Then, for any $0< \delta < \frac58$, one has the lower bound
$$|\{0 < d < X , \ d \in \sq, \ a(f,d) \ne 0  \}| \gg_{f, \delta} X^\delta. $$

\end{theorem}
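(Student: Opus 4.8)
The plan is to follow the strategy of Theorem 2 of~\cite{sahafund}, adapted to the ramified setting. The starting point is a dichotomy: either $f$ is contained in the subspace spanned by unary theta series (single-variable theta functions), or it is not. The theta series contribute Fourier coefficients supported essentially on squares (or on a single square class), and in particular their coefficients at squarefree $d$ behave in a highly constrained way. Thus I would first handle the theta case by hand --- these forms are completely explicit, their nonzero squarefree-indexed coefficients are easily counted from below (indeed there are $\gg X^{1/2}$ of them, comfortably more than $X^\delta$ for $\delta < 5/8$) --- and then concentrate on the generic case where $f$ is orthogonal to the theta subspace, so that the Shimura lift is available and nontrivial.

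In the generic case the key input is the Shimura correspondence together with Waldspurger's theorem relating the squarefree-indexed coefficients $a(f,d)$ (more precisely, their squares, suitably normalized) to central $L$-values $L(\tfrac12, \pi \otimes \chi_{d})$ of the Shimura lift $\pi$ twisted by quadratic characters. The nonvanishing of $a(f,d)$ for many squarefree $d$ is therefore equivalent to a nonvanishing statement for a family of quadratic twists of a fixed $\GL_2$ $L$-function, and for that one has the quantitative results in the literature (this is where a positive proportion, or at least $\gg X^{1-\epsilon}$, worth of nonvanishing twists is known). But to make this argument produce the exponent $5/8$ one instead runs the elementary argument from~\cite{sahafund}: one knows unconditionally that $\sum_{d \le X, d \in \sq} |\widetilde{a}(f,d)|^2 \gg X$ (a lower bound coming from the fact that $f \ne 0$ and Rankin--Selberg/positivity), while each individual normalized coefficient satisfies $\widetilde{a}(f,d) \ll_{f,\epsilon} d^{\theta+\epsilon}$ with $\theta = 3/16$. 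Comparing the two gives that the number of $d \le X$ with $a(f,d)\ne0$ is $\gg X^{1-2\theta-\epsilon} = X^{5/8-\epsilon}$, hence $\gg_{f,\delta} X^\delta$ for any $\delta < 5/8$.

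The role of hypotheses (1)--(3) on $N$ and $\chi$ is to guarantee two things: first, that the ``newform/oldform" structure at the bad primes is well enough behaved that the relevant local representations at primes dividing $N$ are the right ones for Waldspurger's formula and for the Kohnen-style plus-space / local Shimura theory to apply (in particular that the generic component does not degenerate further), and second, that the square-supported (theta) part can be cleanly separated --- the conditions rule out the pathological level structures at $2$ and at odd primes where extra theta series or extra vanishing of twisted coefficients could occur. Concretely these conditions are exactly those under which the main result of~\cite{sahafund} and its proof technique (the combination of the lower bound for the second moment of coefficients with the Bykovski{\u\i} bound) go through essentially verbatim; one checks that each step --- the theta dichotomy, the Rankin--Selberg lower bound, the individual bound on $\widetilde a(f,d)$ over squarefree $d$ coprime to the level --- only uses these hypotheses.

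The main obstacle, as the authors themselves flag, is not the counting argument but ensuring that the structural results used over full level extend to level $N$ under hypotheses (1)--(3): namely that the second-moment lower bound $\sum_{d\le X,\,d\in\sq}|\widetilde a(f,d)|^2 \gg_f X$ genuinely holds in the ramified case (one must control the contribution of oldforms and of the bad Euler factors, and make sure $f\ne 0$ forces infinitely many nonzero squarefree coefficients in the first place, which is where the theta dichotomy and conditions (1)--(3) enter), and that Bykovski{\u\i}'s bound $\widetilde a(f,d)\ll_{f,\epsilon} d^{\theta+\epsilon}$ with $\theta=3/16$ applies to $d$ ranging over squarefree integers coprime to $N$ for a general $f \in S_{k+1/2}(N,\chi)$ rather than just a Kohnen newform of full level. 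Once those two ingredients are in place for level $N$, the deduction of the stated lower bound is immediate and identical to~\cite{sahafund}.
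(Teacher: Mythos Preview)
Your proposal correctly identifies the endgame---comparing the second-moment lower bound $\sum_{d\le X,\,d\in\sq}|\widetilde a(f,d)|^2 \gg_f X$ against Bykovski{\u\i}'s pointwise bound to count nonvanishing coefficients---but it has a genuine gap at precisely the point you yourself flag as ``the main obstacle.'' You acknowledge that the second-moment lower bound for the original $f$ is what needs to be established, and you say this is ``where the theta dichotomy and conditions (1)--(3) enter,'' but you never supply a mechanism. The theta dichotomy is a red herring here: since $k\ge 2$ the weight $k+\tfrac12$ is at least $\tfrac52$, and unary theta series live only in weights $\tfrac12$ and $\tfrac32$, so there is no theta component to separate off. (Incidentally, your claim that a unary theta series has $\gg X^{1/2}$ nonzero squarefree coefficients is false---its coefficients are supported on squares, so there is exactly one squarefree index, namely $d=1$.) With the theta dichotomy gone, you are left asserting the second-moment bound for a general $f$ without justification, and this is exactly what can fail: for instance, oldforms of the shape $g(p^2z)$ have all their coefficients supported on multiples of $p^2$, hence vanish at every squarefree index.

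The paper's proof does \emph{not} attempt to prove the second-moment bound for $f$ directly. Instead it constructs a sequence $f=g_0,g_1,\ldots,g_t$ where $g_i$ is obtained from $g_{i-1}$ by sieving out coefficients at indices divisible by the $i$-th prime $p_i\mid N$ (so $g_i$ lies in a possibly larger space $S_{k+1/2}(NN_i,\chi\chi_i)$). The final form $g_t$ has $a(g_t,n)=0$ whenever $\gcd(n,N)>1$, which is exactly the hypothesis of Proposition~\ref{keyprop}, where the second-moment bound is already available from \cite{sahafund}. The entire content of conditions (i)--(iii) is to ensure that each $g_i$ is \emph{nonzero}: if some $g_i=0$, then all coefficients of $g_{i-1}$ are supported on multiples of $p_i$, and one invokes \cite[Lemma~7]{Serre-Stark} to derive a contradiction with the level/character hypotheses. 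Finally, since the sieving only kills coefficients at indices divisible by some $p_i\mid N$, nonvanishing of $a(g_t,d)$ at a squarefree $d$ coprime to $N$ implies nonvanishing of $a(f,d)$ at the same $d$. This sieving-plus-Serre--Stark reduction is the missing step in your argument; once you insert it, the rest of your outline (second moment plus Bykovski{\u\i}) is exactly the paper's Proposition~\ref{keyprop}.
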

The rest of this subsection is devoted to the proof of the above theorem. We start with the following key proposition.

\begin{proposition}\label{keyprop}Let $k \ge 2$, $N$ be a positive integer that is divisible by $4$ and $\chi$ a Dirichlet character $\ \bmod \ N$. Let $f \in S_{k + \frac12}(N, \chi)$, $f \ne 0$, and suppose that $a(f,n)$ equals 0 whenever $n$ and $N$ have a common prime factor. Then,  for any $0< \delta < \frac58$, one has the lower bound
$$|\{0 < d < X , \ d \in \sq, \ a(f,d) \ne 0  \}| \gg_{f, \delta} X^\delta. $$

\end{proposition}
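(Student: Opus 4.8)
The overall strategy is to reduce the statement to a bound on the number of squarefree integers $d$ in a fixed arithmetic progression (or union of such) for which the $d$-th Fourier coefficient of $f$ is nonzero, and then to deploy a counting argument driven by the Rankin--Selberg method against the subconvexity-type bound $\widetilde a(f,n) \ll_{f,\epsilon} n^{3/16+\epsilon}$ for half-integral weight coefficients recalled in the introduction (Bykovski\u\i's bound, $\theta = 3/16$). Concretely, write $f(z) = \sum_{n>0} a(f,n) e(nz)$ and consider, for $X$ large, the two quantities $S_1(X) = \sum_{0<d<X,\ d\in\sq} |a(f,d)|^2$ and $S_\infty(X) = \max_{0<d<X,\ d\in\sq} |a(f,d)|^2$. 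A lower bound for $S_1(X)$ of the shape $S_1(X) \gg_f X^{k}$ (coming from a mean-value/Rankin--Selberg estimate — the Dirichlet series $\sum a(f,d^2 \cdot)$-type average, or more directly a lower bound for the $L^2$-mass of $f$ restricted to squarefree indices) combined with the pointwise bound $S_\infty(X) \ll_{f,\epsilon} X^{2\theta + (k-1) + \epsilon}$ gives, for the number $\mathcal N(X)$ of squarefree $d<X$ with $a(f,d)\ne 0$, the inequality $\mathcal N(X) \ge S_1(X)/S_\infty(X) \gg_{f,\epsilon} X^{k - (2\theta + k - 1) - \epsilon} = X^{1 - 2\theta - \epsilon}$. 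With $\theta = 3/16$ this is $X^{5/8 - \epsilon}$, which is exactly the claimed range $0 < \delta < 5/8$. (The normalization $\widetilde a$ versus $a$ accounts for the $X^{k-1}$ factors; working throughout with the normalized coefficients makes the bookkeeping cleaner.)

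The first real step is to establish the lower bound for the second moment $S_1(X)$. This is where the hypothesis "$a(f,n) = 0$ whenever $\gcd(n,N)>1$" is used: it guarantees that the squarefree-supported part of $f$ is not thrown away, i.e.\ that $f$ is genuinely supported on indices coprime to $N$, so restricting the second moment to $d$ squarefree (hence, after removing the finitely many bad prime factors, to $d$ coprime to $N$) still captures a positive proportion of $\|f\|^2$. Quantitatively, one uses the Rankin--Selberg unfolding for $\sum_{n\le X} |a(f,n)|^2 n^{-(k-1/2)}$, which is $\gg_f X$; one then sieves out the $d$ that are not squarefree. Because $f$ is supported on $n$ coprime to $N$, the squarefree sieve only removes a density $\prod_{p\nmid N}(1 - p^{-2}) > 0$ fraction (via a standard Möbius inclusion-exclusion over square divisors $\ell^2 \mid n$, using the sub-Rankin bound on $\sum |a(f,\ell^2 m)|^2$ to control the error from large $\ell$), so $S_1(X) \gg_f X^{k}$ survives with an absolute implied constant depending only on $f$.

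The second step is the pointwise bound $a(f,d) \ll_{f,\epsilon} d^{\theta + (k-1)/2 + \epsilon}$ for $d$ squarefree and coprime to $N$; this is precisely Bykovski\u\i's theorem as recalled in the introduction, valid for $f$ of weight $k+1/2$ with $k\ge 1$ and $d$ squarefree coprime to the level. Combining with Step~1 yields the proposition as above, choosing $\epsilon$ small enough that $1 - 2\theta - \epsilon > \delta$. The main obstacle — and the only place requiring care — is Step~1: one must verify that the Rankin--Selberg lower bound for the second moment is genuinely $\gg X$ (not merely $\gg X^{1-\epsilon}$) and that the squarefree sieve does not degrade this, which is exactly where the coprimality-support hypothesis on $f$ is essential; everything else is an application of results already cited. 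I would carry out Steps~1--3 in that order, isolating Step~1 as a short lemma about second moments of half-integral weight forms supported away from the level.
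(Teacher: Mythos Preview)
Your approach is essentially the same as the paper's: combine a second-moment lower bound over squarefree indices with Bykovski{\u\i}'s pointwise bound and divide. The paper is terser---it simply cites \cite[Prop.~3.7]{sahafund} for the statement that the smoothed sum $S(M,X;f)=\sum_{d\in\sq,\,(d,M)=1}|\tilde a(f,d)|^2 e^{-d/X}$ satisfies $S(M,X;f)\gg_f X$, and then applies $|\tilde a(f,m)|^2\ll_{f,\epsilon} m^{3/8+\epsilon}$---whereas you sketch the Rankin--Selberg plus squarefree-sieve argument underlying that citation.

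One harmless slip: with the weight $k+\tfrac12$ and the normalization $\tilde a(f,n)=a(f,n)n^{1/4-k/2}$, your unnormalized exponents are off by $\tfrac12$ (one should have $S_1(X)\gg_f X^{k+1/2}$ and $S_\infty(X)\ll_{f,\epsilon} X^{2\theta+k-1/2+\epsilon}$), but the discrepancy cancels in the ratio and your conclusion $\mathcal N(X)\gg X^{1-2\theta-\epsilon}$ is correct.
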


\begin{proof} The qualitative version of this proposition, i.e., the assertion that there are infinitely many $d\in \sq$ such that $a(f,d) \neq 0$, is just Proposition 3.7 of~\cite{sahafund}. The proof of the quantitative version as stated here requires no new ingredients. Indeed, the proof there proceeded by showing that there exists an integer $M$ such that $$S(M,X;f) := \sum_{\substack{d\in \sq \\ (d, M) = 1}} |\tilde{a}(f, d)|^2 e^{-d/X}$$
satisfies $S(M,X;f) \gg_f X$. Here $\tilde{a}(f,n)$ denotes the ``normalized" Fourier coefficients, defined by
 $$\tilde{a}(f,n) = a(f,n)n^{ \frac14-\frac k2}.$$ Proposition~\ref{keyprop} now follows immediately from the well-known bound due to Bykovski{\u\i}~\cite{byko} that $$|\tilde{a}(f, m)|^2 \ll_{f,\epsilon} m^{\frac{3}{8} + \epsilon}.$$

\end{proof}

We prove Theorem~\ref{hafintgen}. Let $f \in S_{k+\frac{1}{2}}(N, \chi)$ be non-zero where $N, \chi$ satisfy the assumptions listed in the statement of the theorem. Let $2=p_1, p_2,\ldots,p_t$ be the distinct primes dividing $N$. For $1 \le i \le t$, let $S_i = \{p_1, \ldots, p_i\}$.  We will construct a sequence of forms $g_i$, $0 \le i \le t$, such that \begin{enumerate}

\item $g_0 = f$,
\item $g_i \ne 0$ for any $0 \le i \le t$,
\item For $1 \le i \le t$, $g_i \in S_{k+ \frac12}(N N_i, \chi \chi_i)$ where $N_i$ is not divisible by any prime lying outside $S_i$ and $\chi_i$ is a Dirichlet character whose conductor is not divisible by any prime outside $S_i$,
\item $a(g_i, n) = 0$ whenever $n$ is divisible by a prime in $S_i$,
 \item For some $1 \le i \le t$, suppose it is true that $$|\{0 < d < X , \ d \in \sq, \ a(g_i,d) \ne 0  \}| \gg_{g_i, N, \delta} X^\delta. $$ Then it is also true that $$|\{0 < d < X , \ d \in \sq, \ a(g_{i-1},d) \ne 0  \}| \gg_{g_{i-1}, N, \delta} X^\delta. $$
\end{enumerate}

 It is clear that the existence of such a sequence of forms, along with Proposition~\ref{keyprop} above, directly implies Theorem~\ref{hafintgen}. The proof of the fact that such forms $g_i$ exist follows exactly the argument in Section 3.5 of~\cite{sahafund}. Indeed, the only difference is that in~\cite{sahafund}, $N/4$ was assumed to be odd, while here we allow $N/4$ to be divisible by 2 (but not by 4) so long as $\chi_2 =1$. However, a careful look at the proof of Theorem 2 of~\cite{sahafund} shows that the only place where the assumption $N/4$ odd was used was in Section 3.4 in order to show that $g_1 \neq 0$. However if $g_1 = 0$, then $a(g_0, n) = 0$ unless $n$ is even.  By~\cite[Lemma 7]{Serre-Stark}, this implies that the conductor of $\epsilon_2$ divides $N/4$;  here $\epsilon_{2}$ is the quadratic character associated to the field $\Q(\sqrt{2}).$ Since this conductor is equal to 8, this means that $N$ is divisible by 32, a contradiction. This completes the proof of Theorem~\ref{hafintgen}.
\section{Yoshida lifts}\label{yoshidasec}
\subsection{Siegel cusp forms and representations}\label{siegelfromrepsec}
Below we will use a representation theoretic construction of certain elements of $S_k^{(2)}(N)$. In preparation, we will briefly explain the relationship between Siegel modular forms of degree $2$ and automorphic representations of $G=\GSp_4$. For the full modular group this was explained in \cite{asgsch}, and even though we will now require levels, we may still refer to this paper for some details. In the level case the precise correspondence between modular forms and representations is complicated, due to a lack of multiplicity one both locally and globally. However, all we will have to do is construct a cusp form from an irreducible, cuspidal, automorphic representation, and this direction of the correspondence is unproblematic.

Throughout let $\A$ be the ring of adeles of $\Q$. Let $\pi=\otimes\pi_v$ (restricted tensor product) be a cuspidal, automorphic representation of the adelized group $G(\A)$ with trivial central character. The only requirement on $\pi$ necessary for the construction of classical, holomorphic modular forms is that the archimedean component $\pi_\infty$ is a lowest weight representation $\mathcal{E}(l,l')$ with integers $l\geq l'>0$ in the notation of \cite[Sect.\ 2.3]{pitsch2}. If $l'\geq3$, then $\mathcal{E}(l,l')$ is a holomorphic discrete series representation with Harish-Chandra parameter $(l-1,l'-2)$, but $l'=1$ and $l'=2$ are also admissible. Let $K_\infty\cong U(2)$ be the standard maximal compact subgroup of ${\rm Sp}_4(\R)$, and let $(\tau_{l,l'},W_{l,l'})$ be a model for the minimal $K_\infty$-type of $\mathcal{E}_{l,l'}$. Then $\dim W_{l,l'}=l-l'+1$. Up to multiples, the representation $\mathcal{E}(l,l')$ contains a unique vector of weight $(l,l')$ (see \cite[Sect.\ 2.2]{pitsch2}); it corresponds to a highest weight vector $w_1$ in $W_{l,l'}$. In the given model $\pi_\infty$ (which is arbitrary) we fix a non-zero such vector and denote it by $f_\infty$.

As for finite places, if $p$ is a prime such that $\pi_p$ is an unramified representation, let $f_p$ be a non-zero vector in $\pi_p$ such that $f_p$ is invariant under $G(\Z_p)$. For other primes, fix any non-zero vector $f_p$ in $\pi_p$, and let $K_p$ be any compact and open subgroup of $G(\Q_p)$ such that $f_p$ is invariant under $K_p$; for example, $K_p$ could be a principal congruence subgroup of $G(\Z_p)$ of high enough level. Then
$$
 \Gamma=G(\Q)^+\cap\prod_{p<\infty}K_p,
$$
where the superscript ``$+$'' denotes elements with positive multiplier, is a discrete subgroup of ${\rm Sp}_4(\R)$.

Now let $\Phi$ be the vector in the space of $\pi$ corresponding to the pure tensor $\otimes f_v$ in $\otimes\pi_v$. Then $\Phi$ is a $\C$-valued function on $G(\A)$ which is left-invariant under $G(\Q)$ and right-invariant under $\prod_{p<\infty}K_p$. We would like to construct from $\Phi$ a function taking values in the contragredient representation $W_{l,l'}^\vee$. We claim that there exists a unique function $L:\:G(\A)\rightarrow W_{l,l'}^\vee$ such that
\begin{equation}\label{tildePhidefeq}
 \Phi(g)=L(g)(w_1)\qquad\text{for all }g\in G(\A).
\end{equation}
Indeed, let $w_1,\ldots,w_n$ be a basis of $W_{l,l'}$ such that $w_2,\ldots,w_n$ have weights different from $w_1$, and let $L_1,\ldots,L_n$ be a basis of $W_{l,l'}^\vee$. Then, by the Peter-Weyl theorem, there exist uniquely determined complex numbers $c_{ij}(g)$ such that
$$
 \Phi(gh)=\sum_{i,j=1}^nc_{ij}(g)L_i(\tau_{l,l'}(h)w_j)\qquad\text{for all }h\in K_\infty.
$$
Since $\Phi$ has weight $(l,l')$, it follows that $c_{ij}=0$ for $j\neq1$. Hence (\ref{tildePhidefeq}) holds with $L(g)=\sum_ic_{i1}(g)L_i$. The uniqueness of $L$ follows from the construction.

Observe that, by construction, $\Phi(gh)=(\tau_{l,l'}^\vee(h^{-1})L(g))w_1$ for all $h\in K_\infty$, and $L$ is characterized by this property. This implies that
\begin{equation}\label{tildePhirhoeq}
 L(gh)=\tau_{l,l'}^\vee(h^{-1})L(g)\qquad\text{for all }g\in G(\A)\text{ and }h\in K_\infty.
\end{equation}
Furthermore, $L$ is left-invariant under $G(\Q)$ and right-invariant under $\prod_{p<\infty}K_p$.

We can now construct a modular form $F$ on the Siegel upper half space $\mathbb{H}_2$ taking values in $W_{l,l'}^\vee$. First we extend $\tau_{l,l'}^\vee$, which is a representation of $U(2)$, to a representation of $\GL_2(\C)$; by the unitary trick, this can be done in exactly one way. It is easy to verify that this extension has highest weight $(l,l')$ in the sense of \cite[Appendix to I.6]{Fr1991}. We will write $\rho_{l,l'}$ for this extension. For $Z$ in $\mathbb{H}_2$, let $g$ be an element of ${\rm Sp}_4(\R)$ such that $Z=g\langle i_2\rangle$, and set
$$
 F(Z)=\rho_{l,l'}(Ci_2+D)L(g),\qquad\text{where }g=\mat{A}{B}{C}{D}.
$$
Then $F$ is a well-defined holomorphic function on $\mathbb{H}_2$ with values in the space of $\rho_{l,l'}$. It satisfies
\begin{equation}\label{Frhollpeq}
 F(\gamma\langle Z\rangle)=\rho_{l,l'}(CZ+D)F(Z),\qquad\text{for }\gamma=\mat{A}{B}{C}{D}\in\Gamma.
\end{equation}
Hence, $F$ is a vector-valued modular form of type $\rho_{l,l'}$ with respect to $\Gamma$, in the sense of \cite{Fr1991}. It is a cusp form, and it is an eigenform of the local Hecke algebras $\mathcal{H}_p$ at each place $p$ where $\pi_p$ is unramified. It is scalar-valued if and only if $l=l'$.

In our application below we will have a situation where each $K_p$ can be chosen to be a Siegel congruence subgroup, i.e., a group of type
\begin{equation}\label{Gamma0localdefeq}
\Gamma_{0,p}(M) := \left\{\begin{pmatrix}A&B\\ C&D \end{pmatrix} \in G(\Z_p)\;|\;C \equiv 0 \pmod{M\Z_p}\right\}.
\end{equation}
These are, of course, the local analogues of the groups defined in (\ref{Gamma0defeq}). If $K_p=\Gamma_{0,p}(p^{m_p})$ for all $p$, and if $l=l'=k$, then the resulting $F$ will be an element of the space $S_k^{(2)}(N)$ defined earlier, where $N=\prod_pp^{m_p}$.

Provided that the multiplier maps from the local groups $K_p$ to $\Z_p^\times$ are all surjective, the above procedure can be reversed (see \cite[Sect.\ 4.5]{asgsch}), and one can reconstruct the automorphic representation $\pi$ from the modular form $F$. In general, starting from an arbitrary cusp form $F$ which is an eigenform for almost all local Hecke algebras, it is unclear whether $F$ generates an \emph{irreducible} automorphic representation.
\subsection{Representation-theoretic Yoshida liftings}\label{yoshidarepsec}
In the language of automorphic representations, the Yoshida lifting is a certain case of Langlands functoriality. We will first make some comments on  dual groups and $L$-packets, and then explain how the Yoshida lifting can be constructed using the theta correspondence. In the next section we will use this group theoretic lifting to construct holomorphic Siegel modular forms.

We fix a totally real number field $F$. The Yoshida lifting comes from the embedding of dual groups
\begin{align}\label{dualgroupmorphismeq}
 \{(g_1,g_2)\in\GL_2(\C)\times\GL_2(\C)\:|\:\det(g_1)=\det(g_2)\}&\longrightarrow\GSp_4(\C),\\
 (\mat{a}{b}{c}{d},\mat{a'}{b'}{c'}{d'})&\longrightarrow\left(\begin{matrix}a&&b\\&a'&&b'\\c&&d\\&c'&&d'\end{matrix}\right).\nonumber
\end{align}
The principle of functoriality predicts that for a pair $\tau_1,\tau_2$ of automorphic representations of $\GL_2(\A)$ with the same central character there exists an $L$-packet $\Pi(\tau_1,\tau_2)$ of automorphic representations of $G(\A)$ such that
\begin{equation}\label{basicLrelationeq}
 L(s,\Pi(\tau_1,\tau_2))=L(s,\tau_1)L(s,\tau_2).
\end{equation}
The trivial central character version of the Yoshida lifting comes from the embedding of dual groups $\SL_2(\C)\times\SL_2(\C)\rightarrow{\rm Sp}_4(\C)$ given by the same formula, and predicts that for a pair $\tau_1,\tau_2$ of automorphic representations of $\PGL_2(\A)$ there exists an $L$-packet $\Pi(\tau_1,\tau_2)$ of automorphic representations of $\PGSp_4(\A)$ such that (\ref{basicLrelationeq}) holds.

Let us assume that $\tau_1=\otimes\tau_{1,v}$ and $\tau_2=\otimes\tau_{2,v}$ with irreducible, admissible, \emph{tempered} representations $\tau_{1,v},\tau_{2,v}$ of $\GL_2(F_v)$; this is all we need for our application. By the results of \cite{gantakGSp4} or the construction in \cite{rob2001}, the local $L$-packets resulting from the morphism (\ref{dualgroupmorphismeq}) have one or two elements. A packet has two elements precisely if $\tau_{1,v}$ and $\tau_{2,v}$ are both discrete series representations. In this case
$$
 \Pi(\tau_{1,v},\tau_{2,v})=\{\Pi_v^{\rm gen},\Pi_v^{\rm ng}\},
$$
where $\Pi_v^{\rm gen}$ is a generic representation and $\Pi_v^{\rm ng}$ is a non-generic representation. By definition, the global $L$-packet $\Pi(\tau_1,\tau_2)$ consists of all representations $\Pi=\otimes\Pi_v$ where $\Pi_v$ lies in the local packet $\Pi(\tau_{1,v},\tau_{2,v})$. Hence, if $T$ denotes the set of places where both $\tau_{1,v}$ and $\tau_{2,v}$ are square-integrable, then the number of irreducible admissible representations of $G(\A)$ in $\Pi(\tau_1,\tau_2)$ is $2^{\#T}$.

Arthur's multiplicity formula makes a precise prediction on which elements of the global packet occur in the discrete automorphic spectrum. Given $\Pi=\otimes\Pi_v$ in $\Pi(\tau_1,\tau_2)$, let $T^{\rm ng}$ be the set of places where $\Pi_v$ is non-generic (this can only happen at places where the local packet has two elements). Then the prediction is that $\Pi$ occurs in the discrete spectrum if and only if $\#T^{\rm ng}$ is \emph{even}. Hence, the number of discrete elements in the $L$-packet is
$$
 \#\Pi(\tau_1,\tau_2)_{\rm disc}=\begin{cases}
  1&\text{if }T=\emptyset,\\
  2^{\#T-1}&\text{if }T\neq\emptyset.
                                 \end{cases}
$$
Thus, the global packet is \emph{finite} and \emph{unstable}. The prediction of Arthur's multiplicity formula in this situation has been proven in \cite[Thm.\ 8.6 (2)]{rob2001}. It turns out that in fact the discretely occurring $\Pi$ are cuspidal, automorphic representations.

The construction of the local and global packets in \cite{gantakGSp4} and \cite{rob2001} uses the theta correspondence (with similitudes) between $G=\GSp_4$ and various orthogonal groups of four-dimensional quadratic spaces. If $D_v$ is a (possibly split) quaternion algebra over $F_v$, considered as a quadratic space with the reduced norm, then it is well known that there is an exact sequence
\begin{equation}\label{GSOexacteq}
 1\longrightarrow F_v^\times\longrightarrow D_v^\times\times D_v^\times\longrightarrow{\rm GSO}(D_v)\longrightarrow1.
\end{equation}
Thus, representations of ${\rm GSO}(D_v)$ can be identified with pairs of representations of $D_v^\times$ with the same central character. Each such pair gives then rise to a representation of $G(F_v)$ via the theta correspondence. More precisely, one first induces from ${\rm GSO}(D_v)$ to ${\rm GO}(D_v)$; if this induction is irreducible, it participates in the theta correspondence with $G(F_v)$, and if it is not irreducible, there is a unique irreducible component that participates in the theta correspondence with $G(F_v)$. See Sect.\ 3 of \cite{gantakGSp4} for more information on the relationship between ${\rm GSO}(D_v)$ and ${\rm GO}(D_v)$.

The construction of the local packets $\Pi(\tau_{1,v},\tau_{2,v})$ above is now as follows. First, let $D_v=M_2(F_v)$ be the split quaternion algebra, so that $D_v^\times=\GL_2(F_v)$. Then, via the theta correspondence, the pair $\tau_{1,v},\tau_{2,v}$ gives rise to an irreducible, admissible representation of $G(F_v)$, which is the \emph{generic} member $\Pi^{\rm gen}_v$ in the local packet. Next let $D_v$ be the unique division quaternion algebra over $F_v$. If $\tau_{1,v}$ and $\tau_{2,v}$ are both square integrable, we transfer these representations to $D_v^\times$ via the Jacquet-Langlands correspondence. Using again the theta correspondence, the pair of representations thus obtained gives rise to another representation of $G(F_v)$, which is the \emph{non-generic} member $\Pi_v^{\rm ng}$ of the local packet.

In the global case, let $T$ be as above and let $T^{\rm ng}$ be a subset of $T$ of even cardinality. Let $D$ be the global quaternion algebra over $F$ which is non-split at exactly the places in $T^{\rm ng}$. We use the Jacquet-Langlands lifting to transfer $\tau_1$ and $\tau_2$ to automorphic representations $\tau_1'$ and $\tau_2'$ of $D_\A^\times$. By the global analogue of the exact sequence (\ref{GSOexacteq}), we obtain an automorphic representation of the group ${\rm GSO}(D_\A)$. It was proved in \cite{rob2001} that the global theta lifting of this representation to $G(\A)$ is non-vanishing (again, one should first transition to the global group ${\rm GO}(D_\A)$; see \cite[Sect.\ 7 and proof of Thm.\ 8.5]{rob2001}). It follows from the compatibility of the local and global theta correspondence that this global lifting is the element $\Pi=\otimes\Pi_v$ in the global packet $\Pi(\tau_1,\tau_2)$ for which $\Pi_v$ is non-generic exactly at the places in $T^{\rm ng}$.

We close this section by giving a more explicit description of the non-archime\-dean local packets with two elements. To ease the notation, let us omit the subscript $v$ from the local field $F$. The notation we use for irreducible, admissible representations of $G(F)$ goes back to \cite{sallytadic}. The classification into types I, II, etc.\ is taken from \cite{NF}. The $L$-parameters listed in Table A.7 of \cite{NF} coincide with those defined in \cite{gantakGSp4}. In the following table $\sigma$ stands for an arbitrary character of $F^\times$, and $\xi$ denotes a non-trivial quadratic character. The Steinberg representation of $\GL_2(F)$ is denoted by ${\rm St}_{\GL(2)}$. The symbols $\pi$, $\pi_1$ and $\pi_2$ stand for supercuspidal representations of $\GL_2(F)$, and $\omega_\pi$ denotes the central character of $\pi$. Finally, $\nu$ denotes the normalized absolute value on $F$.
\begin{equation}\label{padicpacketseq}
 \begin{array}{cclcc}
   \tau_1&\tau_2&\multicolumn{2}{c}{\Pi(\tau_1,\tau_2)}&\text{type}\\
  \toprule
   \sigma\St_{\GL(2)}&\xi\sigma\St_{\GL(2)}&\Pi^{\rm gen}&\delta([\xi,\nu\xi],\nu^{-1/2}\sigma)&{\rm Va}\\
   &&\Pi^{\rm ng}&\multicolumn{2}{c}{\text{non-generic supercuspidal}}\\
   \sigma\St_{\GL(2)}&\sigma\St_{\GL(2)}&\Pi^{\rm gen}&\tau(S,\nu^{-1/2}\sigma)&{\rm VIa}\\
   &&\Pi^{\rm ng}&\tau(T,\nu^{-1/2}\sigma)&{\rm VIb}\\
   \pi&\pi&\Pi^{\rm gen}&\tau(S,\pi)&{\rm VIIIa}\\
   &&\Pi^{\rm ng}&\tau(T,\pi)&{\rm VIIIb}\\
   \sigma\St_{\GL(2)}&\sigma\pi\;(\omega_\pi=1)&\Pi^{\rm gen}&\delta(\nu^{1/2}\pi,\nu^{-1/2}\sigma)&{\rm XIa}\\
   &&\Pi^{\rm ng}&\multicolumn{2}{c}{\text{non-generic supercuspidal}}\\
   \pi_1&\pi_2\;(\not\cong\pi_1)&\Pi^{\rm gen}&\multicolumn{2}{c}{\text{generic supercuspidal}}\\
   &&\Pi^{\rm ng}&\multicolumn{2}{c}{\text{non-generic supercuspidal}}\\
 \end{array}
\end{equation}
We remark that these packets, and many more, appear also in \cite{robjjl}.
\subsection{Classical Yoshida liftings}
In view of the procedure explained in Sect.\ \ref{siegelfromrepsec}, the representation theoretic construction outlined in the previous section may be used to construct holomorphic Siegel modular forms. We will now work over the number field $\Q$. For simplicity, we will consider the trivial central character version of the Yoshida lifting. For $i=1,2$ let $\tau_i=\otimes\tau_{i,v}$ be a cuspidal, automorphic representation of $\GL_2(\A)$ corresponding to a primitive cuspform $f_i$ of (even) weight $k_i$ and level $N_i$. Further, we will make the assumption that $N_1$ and $N_2$ are squarefree, since complete local information is currently only available in this case (however, it is possible to construct holomorphic Yoshida liftings in somewhat greater generality). We will also assume that $k_1 \ge k_2$. Since the temperedness hypothesis is satisfied, we obtain a global $L$-packet $\Pi(\tau_1,\tau_2)$ as in the previous section.

To understand the local packet at the archimedean place, let $W_\R=\C^\times\sqcup j\C^\times$ be the real Weil group, as in \cite{knapparch}. For an odd, positive integer $l$ let $\varphi_l$ be the two-dimensional, irreducible representation of $W_\R$ given by
\begin{equation}\label{GL2archWeil4}
 \C^\times\ni re^{i\theta} \longmapsto\mat{e^{il\theta}}{}{}{e^{-il\theta}},\quad j\longmapsto\mat{}{-1}{1}{}.
\end{equation}
By the local Langlands correspondence, $\varphi_l$ is the parameter of a discrete series representation of $\PGL_2(\R)$ with minimal weight $l+1$. Hence, the archimedean parameter of $\tau_{i,\infty}$ is $\varphi_{k_i-1}$, for $i=1,2$. Composing with the dual group morphism (\ref{dualgroupmorphismeq}), we obtain the parameter $\varphi_{k_1-1}\oplus\varphi_{k_2-1}$ (as a representation of $W_\R$). If $k_1\geq k_2+2$, it corresponds to a two-element packet of discrete series representations of $\PGSp_4(\R)$ with Harish-Chandra parameter
\begin{align*}
 (\lambda_1,\lambda_2)&=\Big(\frac{(k_1-1)+(k_2-1)}2,\:\frac{(k_1-1)-(k_2-1)}2\Big)\\
 &=\Big(\frac{k_1+k_2-2}2,\:\frac{k_1-k_2}2\Big).
\end{align*}
In order to obtain holomorphic modular forms, we need to choose the holomorphic element in the $L$-packet. In the notation of Sect.\ \ref{siegelfromrepsec}, this is the lowest weight representation $\mathcal{E}(l,l')$ with
\begin{equation}\label{minKk1k2eq}
 (l,l')=\Big(\frac{k_1+k_2}2,\:\frac{k_1-k_2+4}2\Big)
\end{equation}
(the $(1,2)$-shift between the Harish-Chandra parameter and the minimal $K$-type is half the sum of the positive non-compact roots minus half the sum of the positive compact roots). Hence, the element $\Pi=\otimes\Pi_v$ in the global packet $\Pi(\tau_1,\tau_2)$ we are going to construct will have this lowest weight representation as its archimedean component $\Pi_\infty$. If $k_1=k_2$, these considerations remain true except that $\mathcal{E}(l,l')$ will be a limit of discrete series representation.

Now $\Pi_\infty$ is known to be the \emph{non-generic} member $\Pi^{\rm ng}_\infty$ of the archimedean packet. Therefore, in order to satisfy the parity condition coming from Arthur's multiplicity formula, we require an \emph{odd} number of (finite) primes $p$ such that $\Pi_p$ is non-generic. Under our assumption that $N_i$ is squarefree, the local component $\tau_{i,p}$ is square-integrable if and only if $p|N_i$. Hence, the parity condition can be satisfied if and only if $M:=\gcd(N_1,N_2)>1$. We will make this assumption.

It is well known (and easy to verify) that, for $p|N_i$, the local component $\tau_{i,p}$ is an unramified twist of the Steinberg representation. More precisely, if the Atkin-Lehner eigenvalue of $f_i$ at $p$ is $-1$, then $\tau_{i,p}=\St_{\GL(2)}$, and otherwise $\tau_{i,p}=\xi\St_{\GL(2)}$, where $\xi$ is the non-trivial, quadratic, unramified character of $\Q_p^\times$. The local packets for places $p|M$ can now be read off table (\ref{padicpacketseq}). For places $p\nmid M$ but $p|N$, where $N={\rm lcm}(N_1,N_2)$, the local packets have one element and can be read off \cite[Table A.7]{NF}. The following table summarizes all possibilities of local packets for $p|N$. The character $\sigma$ in the table is quadratic and unramified, but allowed to be trivial. Since we would like to construct modular forms with respect to $\Gamma_{0,p}(N)$, we have indicated in the last column the dimension of fixed vectors under the local Siegel congruence subgroup $\Gamma_{0,p}(p)$ defined in (\ref{Gamma0localdefeq}). This information about fixed vectors is taken from \cite[Table A.15]{NF}; note that Va$^*$, being a supercuspidal representation, has no Iwahori fixed vectors.
\begin{equation}\label{padicpacketssquarefreeeq}
 \begin{array}{cclccc}
   \tau_1&\tau_2&\multicolumn{2}{c}{\Pi(\tau_1,\tau_2)}&\text{type}&\dim\\
  \toprule
   \sigma\St_{\GL(2)}&\xi\sigma\St_{\GL(2)}&\Pi^{\rm gen}&\delta([\xi,\nu\xi],\nu^{-1/2}\sigma)&{\rm Va}&0\\
   &&\Pi^{\rm ng}&\delta^*([\xi,\nu\xi],\nu^{-1/2}\sigma)&{\rm Va}^*&0\\
   \sigma\St_{\GL(2)}&\sigma\St_{\GL(2)}&\Pi^{\rm gen}&\tau(S,\nu^{-1/2}\sigma)&{\rm VIa}&1\\
   &&\Pi^{\rm ng}&\tau(T,\nu^{-1/2}\sigma)&{\rm VIb}&1\\
   \sigma\St_{\GL(2)}&\chi\times\chi^{-1}\;\text{(unram.)}&\multicolumn{2}{c}{ \qquad \quad \sigma\chi^{-1}\St_{\GL(2)}\rtimes\chi}&{\rm IIa}&1
 \end{array}
\end{equation}
We see from the last column that, in order to construct modular forms with respect to $\Gamma_0(N)$, we need to completely avoid the packet $\{{\rm Va},{\rm Va}^*\}$. In other words, the Atkin-Lehner eigenvalues of $f_1$ and $f_2$ need to coincide for all $p|M$, an assumption we will make from now on.

Under this assumption we have either a VIa or VIb type representation at places $p|M$, and since either one of these representations contains a $\Gamma_{0,p}(p)$ fixed vector, we can make an arbitrary choice. As pointed out above, the only constraint is that the non-generic VIb has to appear an odd number of times. By the general procedure outlined in Sect.\ \ref{siegelfromrepsec} of constructing vector-valued modular forms from automorphic representations, we now obtain the following result. Even though it is not necessary for our applications further below, we have included a statement about Atkin-Lehner eigenvalues for completeness; see \cite[Sect.\ 3.2]{sch} for the definition of Atkin-Lehner involutions in the case of Siegel modular forms.

\begin{proposition}\label{classicalyoshidaprop}
 Let $k_1$ and $k_2$ be even, positive integers with $k_1\geq k_2$. Let $N_1$, $N_2$ be two positive, squarefree integers such that $M = \gcd(N_1, N_2)>1$. Let $f$ be a classical newform of weight $k_1$ and level $N_1$ and $g$ be a classical newform of weight $k_2$ and level $N_2$, such that $f$ and $g$ are not multiples of each other. Assume that for all primes $p$ dividing $M$ the Atkin-Lehner eigenvalues of $f$ and $g$ coincide. Put $N = \mathrm{lcm}(N_1, N_2)$. Then for any divisor $M_1$ of $M$ with an \emph{odd} number of prime factors, there exists a non-zero holomorphic Siegel cusp form $F_{f,g} = F_{f,g;M_1}$ with the following properties.
 \begin{enumerate}
  \item $F_{f,g}$ is a modular form with respect to $\Gamma_0(N)$ of type $\rho_{l,l'}$ (see (\ref{Frhollpeq})), where $(l,l')$ is as in (\ref{minKk1k2eq}).
  \item $F_{f,g}$ is an eigenfunction of the local Hecke algebra at all places $p\nmid N$, and generates an irreducible cuspidal representation $\Pi_{f,g}$ of $\GSp_4(\A)$.
  \item $F_{f,g}$ is an eigenfunction of the operator $U(p)$ for all $p|N$.
  \item For $p|N$, let $\epsilon_p$ be the Atkin-Lehner eigenvalue of $F_{f,g}$ at $p$, and let $\delta_p$ be the Atkin-Lehner eigenvalue of $f$ (if $p|N_1$) or $g$ (if $p|N_2$). Then, for all $p|N$,
   $$
    \epsilon_p=\begin{cases}
                \delta_p&\text{if }p\nmid M_1,\\
                -\delta_p&\text{if }p|M_1.
               \end{cases}
   $$
  \item There is an equality of (complete) Langlands $L$-functions
  $$
   L(s,\Pi_{f,g}) = L(s,\pi_f)L(s,\pi_g),
  $$
  where $\pi_f$ and $\pi_g$ are the cuspidal representations of $\GL_2(\A)$ attached to $f$ and $g$.
  \item Let $D$ be the definite quaternion algebra over $\Q$ ramified exactly at (infinity and) the primes dividing $M_1$. Let $\pi'_f$ (resp.\ $\pi'_g$) be the Jacquet-Langlands transfer of $\pi_f$ (resp.\ $\pi_g$) to $D^\times_\A$. Then $\Pi_{f,g}$ is the global theta lift from $(D^\times_\A \times  D^\times_\A)/\A^\times\cong{\rm GSO}(D_\A) $ to $\GSp_4(\A)$ of the representation $\pi'_f \boxtimes \pi'_g$.
 \end{enumerate}
\end{proposition}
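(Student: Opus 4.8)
The plan is to realize the conjectural $\Pi_{f,g}$ as an explicit member of the global $L$-packet $\Pi(\tau_1,\tau_2)$ attached to $\tau_1=\pi_f$ and $\tau_2=\pi_g$ in the manner of Section~\ref{yoshidarepsec}, and then to pass from $\Pi_{f,g}$ to the classical cusp form $F_{f,g}$ by the recipe of Section~\ref{siegelfromrepsec}. First I would record that, $f$ and $g$ being newforms of even weight on $\Gamma_0(N_1)$, $\Gamma_0(N_2)$ (hence of trivial nebentypus), $\tau_1$ and $\tau_2$ are cuspidal automorphic representations of $\PGL_2(\A)$ which are tempered at every place: discrete series at $\infty$ by the holomorphy of $f,g$, unramified tempered at $p\nmid N_i$, and an unramified twist of the Steinberg representation at $p\mid N_i$ (using that $N_i$ is squarefree). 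We are thus in the setting of Section~\ref{yoshidarepsec}, so the packet $\Pi(\tau_1,\tau_2)$ for $\PGSp_4(\A)$ exists and satisfies \eqref{basicLrelationeq}. I would then identify the set $T$ of places where both $\tau_{1,v}$ and $\tau_{2,v}$ are square-integrable as $\{\infty\}\cup\{p:p\mid M\}$, since for finite $p$ the local component $\tau_{i,p}$ is square-integrable precisely when $p\mid N_i$. The hypothesis that $f$ and $g$ are not proportional translates to $\tau_1\not\cong\tau_2$, a condition that enters the cuspidality and non-vanishing arguments of \cite{rob2001}.

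Next I would fix the divisor $M_1\mid M$ with an odd number of prime factors and put $T^{\rm ng}=\{\infty\}\cup\{p:p\mid M_1\}\subseteq T$; this set has \emph{even} cardinality. I would define $\Pi_{f,g}=\otimes\Pi_v$ by taking $\Pi_v$ to be the non-generic member $\Pi_v^{\rm ng}$ of the local packet for $v\in T^{\rm ng}$ and the generic member $\Pi_v^{\rm gen}$ (or the unique element, for singleton packets) otherwise. At $\infty$ the non-generic member is exactly the lowest-weight representation $\mathcal{E}(l,l')$ with $(l,l')$ as in \eqref{minKk1k2eq} (a holomorphic discrete series if $k_1\ge k_2+2$, a limit of discrete series if $k_1=k_2$). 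Since $\#T^{\rm ng}$ is even, Arthur's multiplicity formula --- proved in this situation in \cite[Thm.\ 8.6(2)]{rob2001} --- places $\Pi_{f,g}$ in the discrete spectrum, and \cite{rob2001} further shows that it is cuspidal and that it equals the global theta lift to $\GSp_4(\A)$ of $\pi'_f\boxtimes\pi'_g$, where $\pi'_f,\pi'_g$ are the Jacquet-Langlands transfers to $D^\times_\A$ and $D$ is the quaternion algebra ramified exactly at the places of $T^{\rm ng}$, i.e.\ the definite algebra ramified at $\infty$ and at the primes dividing $M_1$. This gives property (vi); property (v) is the instance of \eqref{basicLrelationeq} read for the member $\Pi_{f,g}\in\Pi(\tau_1,\tau_2)$ (equivalently, the morphism \eqref{dualgroupmorphismeq} carries the defining $4$-dimensional representation of $\GSp_4(\C)$ to the sum of the two standard representations).

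I would then feed $\Pi_{f,g}$ into the construction of Section~\ref{siegelfromrepsec}: take the weight-$(l,l')$ vector $f_\infty$ at $\infty$, a spherical vector at each $p\nmid N$ (with $K_p=G(\Z_p)$), and a non-zero $\Gamma_{0,p}(p)$-fixed vector at each $p\mid N$ (with $K_p=\Gamma_{0,p}(p)$ as in \eqref{Gamma0localdefeq}). Table~\eqref{padicpacketssquarefreeeq} shows that the Atkin-Lehner hypothesis forces the local packet at $p\mid M$ to be $\{\text{VIa},\text{VIb}\}$ rather than $\{\text{Va},\text{Va}^*\}$, and that in every surviving local type (VIa, VIb, IIa) the space of $\Gamma_{0,p}(p)$-fixed vectors is one-dimensional, so the required vector exists (it would not for the supercuspidal Va$^*$). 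Because $N$ is squarefree, $G(\Q)^+\cap\prod_{p<\infty}K_p$ contains $\Gamma_0^{(2)}(N)$, and Section~\ref{siegelfromrepsec} then produces a non-zero holomorphic cusp form $F_{f,g}$ of type $\rho_{l,l'}$ with respect to $\Gamma_0^{(2)}(N)$ --- non-zero since the chosen pure tensor is a non-zero vector in the cuspidal $\Pi_{f,g}$ --- which is an eigenform of the local Hecke algebra at every $p\nmid N$; this is (i) and the first half of (ii). Since each multiplier map $K_p\to\Z_p^\times$ is surjective (for $\Gamma_{0,p}(p)$ one can use $\mathrm{diag}(u,u,1,1)$), the reversal in Section~\ref{siegelfromrepsec} shows $F_{f,g}$ generates the irreducible cuspidal $\Pi_{f,g}$, completing (ii). For (iii), the operator $U(p)=T_2(p)$ of \cite{sch} preserves the one-dimensional space of $\Gamma_{0,p}(p)$-fixed vectors in $\Pi_p$, hence acts on $F_{f,g}$ by a scalar; for (iv), the Atkin-Lehner involution at $p\mid N$ acts on this same line by the local Atkin-Lehner sign of $\Pi_p$, which the tables of \cite{NF} (cross-checked with \cite{sch}) give as $\delta_p$ for the generic local components (VIa and IIa) and $-\delta_p$ for the non-generic VIb, which occurs exactly at the primes dividing $M_1$.

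The existence statement itself is not the main obstacle, since it rests on the theorems of \cite{rob2001} invoked above. The delicate part, requiring the most care, will be the local bookkeeping: verifying that each relevant local packet contains a $\Gamma_{0,p}(p)$-fixed vector (which forces us to exclude the supercuspidal Va$^*$, and hence to impose the Atkin-Lehner agreement on $f$ and $g$), that this fixed space is exactly one-dimensional so that $U(p)$ and the Atkin-Lehner involutions act by scalars with the stated signs, and that the two normalizations --- Harish-Chandra parameter versus minimal $K$-type in \eqref{minKk1k2eq}, and local level versus global level --- match up so that $F_{f,g}$ genuinely lands in the space claimed in (i).
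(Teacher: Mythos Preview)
Your proposal is correct and follows essentially the same route as the paper: construct $\Pi_{f,g}$ as the specific member of the global packet $\Pi(\tau_1,\tau_2)$ determined by the choice of $T^{\rm ng}=\{\infty\}\cup\{p\mid M_1\}$, invoke \cite{rob2001} for non-vanishing and cuspidality of the theta lift, and then descend to $F_{f,g}$ via Section~\ref{siegelfromrepsec} using the one-dimensionality of the local $\Gamma_{0,p}(p)$-fixed spaces recorded in table~\eqref{padicpacketssquarefreeeq}. The paper's own proof is terse --- it simply asserts that (i), (ii), (v), (vi) follow from the construction in Section~\ref{yoshidarepsec}, argues (iii) exactly as you do from the one-dimensionality, and deduces (iv) from \cite[Table~A.15]{NF} --- so your write-up is really an expanded version of the same argument, with the local bookkeeping (which you correctly flag as the point requiring care) made explicit.
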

\begin{proof}
 All statements except iii) and iv) follow from the construction explained in Sect.\ \ref{yoshidarepsec}. To prove iii), note that the operator $U(p)$ defined in (\ref{upaction}) corresponds to a certain element in the local Hecke algebra at $p$ consisting of left and right $\Gamma_{0,p}(p)$ invariant functions; see for example the appendix to \cite{boch-up}. Since, by (\ref{padicpacketssquarefreeeq}), the local space of $\Gamma_{0,p}(p)$ fixed vectors is one-dimensional in each case, this Hecke algebra acts via scalars on the one-dimensional spaces. In particular, $F_{f,g}$ is a $U(p)$ eigenvector. Finally, iv) can be deduced from (\ref{padicpacketssquarefreeeq}) and the Atkin-Lehner eigenvalues given in \cite[Table A.15]{NF}.
\end{proof}

\begin{remark}
 In our application below we will set $k_1=2k$ for a positive integer $k$ and $k_2=2$. In this case $F_{f,g}\in S_{k+1}^{(2)}(N)$.
\end{remark}

\begin{remark}
 The cusp forms $F_{f,g} = F_{f,g;M_1}$ constructed in the proposition are known as Yoshida lifts. The theory was initiated in \cite{yosh1980} using a ``semi-classical'' language. The non-vanishing problem for Yoshida's construction was solved in \cite{bocsch1991} for the scalar-valued case, and in \cite{bocsch1997} for the vector-valued case. While the Siegel cusp forms constructed in Proposition \ref{classicalyoshidaprop} and in \cite{bocsch1997} are the same, the representation theoretic approach is slightly better suited for our purposes. One reason is that the $F_{f,g}$ from Proposition \ref{classicalyoshidaprop} automatically generate an irreducible, cuspidal representation\footnote{R.\ Schulze-Pillot has pointed out to the authors that it can be shown using results of Moeglin \cite{Moeglin1997} that the Yoshida liftings of \cite{bocsch1997} indeed generate an irreducible cuspidal representation.} of $\GSp_4(\A)$.
\end{remark}
\begin{remark}
 In \cite{yosh1980} Yoshida also considers a construction of Siegel modular forms from Hilbert modular forms. For a thorough representation-theoretic treatment of this lifting, see \cite{rob2001} and \cite{robjjl}. The local data given in \cite{robjjl} shows that the resulting modular forms cannot be with respect to a congruence subgroup $\Gamma^{(2)}_0(N)$ with square-free $N$. The same is true for the imaginary-quadratic version of this lifting considered in \cite{harsoutay}, since the non-archimedean situation is identical. Since our Theorem \ref{th:nonvanfouriersiegel} is for square-free levels only, it does not apply to these kinds of Yoshida liftings.
\end{remark}
\begin{remark}
 For a positive integer $N$ let $\Gamma^{\rm para}(N)$ be the paramodular group of level $N$, as defined in \cite{robjjl}. It is not possible to construct holomorphic Yoshida lifts with respect to $\Gamma^{\rm para}(N)$, for any $N$. The reason is that, as pointed out above, holomorphy forces at least one of the finite components in $\Pi=\otimes\Pi_v$ to be one of the non-generic representations occurring in table (\ref{padicpacketseq}). By Theorem 3.4.3 of \cite{NF}, these non-generic representations have no paramodular vectors.
\end{remark}

\section{Bessel periods and  $L$-values}\label{besselsec}
\subsection{Bessel periods}\label{s:besselperiod}\nopagebreak
Let $S = \begin{pmatrix}
  a & b/2\\
  b/2 & c\\
\end{pmatrix} \in M_2(\Q)$ be a symmetric matrix. Put $d = 4ac-b^2$ and define the element
$$
\xi = \xi_S = \begin{pmatrix}
b/2 & c\\
-a & -b/2\\
\end{pmatrix}.
$$
Note that $$\xi^2 = \begin{pmatrix} -\frac{d}{4} &\\&-\frac{d}{4} \end{pmatrix}.$$ Let $K$ denote the subfield $\Q(\sqrt{-d})$ of $\C$.
We always identify $\Q(\xi)$ with $K$ via
\begin{equation}\label{e:L}
\Q(\xi)\ni x + y\xi \mapsto x +
y\frac{\sqrt{-d}}{2} \in \field, \ x,y\in \Q.
\end{equation}

We define a subgroup $T =T_S$ of $\GL(2)$ by
\begin{equation}
T(\Q) = \{g \in \GL(2,\Q)\;|\; \T{g}Sg =\det(g)S\}.
\end{equation}
It is not hard to verify that $T(\Q) = \Q(\xi)^\times$. We identify
$T(\Q)$ with $\field^\times$ via~\eqref{e:L}.
We can consider $T$ as a subgroup of $G$ via
\begin{equation}\label{embedding}
T \ni g \longmapsto
\begin{pmatrix}
g & 0\\
0 & \det(g)\ \T{g^{-1}}
\end{pmatrix} \in G.
\end{equation}
Let us denote by $U$ the subgroup of $G$ defined by
$$
U = \{u(X) =
\begin{pmatrix}
1_2 & X\\
0 & 1_2\\
\end{pmatrix}\;|\;\T{X} = X\}.
$$
Let $R$ be the subgroup of $G$ defined by $R=TU$.

Recall that $\A$ denotes the ring of adeles of $\Q$. Let $\psi = \prod_v\psi_v$ be a character of $\A$ such that
\begin{itemize}
\item The conductor of $\psi_p$ is $\Z_p$ for all (finite) primes $p$,
\item $\psi_\infty(x) = e(x),$ for $x \in \R$,
\item $\psi|_\Q =1.$
\end{itemize} We define the
character $\theta = \theta_S$ on $U(\A)$ by $\theta(u(X))=
\psi(\Tr(SX))$. Let $\chi$ be a character of $T(\A) / T(\Q)$ such
that $\chi |_{\A^\times}= 1$. Via~\eqref{e:L} we can think of
$\chi$ as a character of $\field^\times(\A)/\field^\times$ such
that $\chi |_{\A^\times} = 1$. Denote by $\chi \otimes \theta$ the
character of $R(\A)$ defined by $(\chi \otimes \theta)(tu) =
\chi(t)\theta(u)$ for $t\in
T(\A)$ and $u\in U(\A).$

 Let $\mathcal{A}_0(G(\Q)\bs G(\A), 1)$ denote the space of cusp forms on $G(\A)$ with trivial central character; thus any $\Phi \in \mathcal{A}_0(G(\Q)\bs G(\A), 1)$ can be written as a finite sum of vectors in irreducible cuspidal representations of $G(\A)$.

 For $\Phi \in \mathcal{A}_0(G(\Q)\bs G(\A), 1)$, we define the Bessel period $B(\Phi) = B_{S, \chi, \psi}(\Phi) $
by
\begin{equation}\label{defbessel}
  B(\Phi) =
  \int_{\A^\times R(\Q)\bs R(\A)} (\chi \otimes \theta)(r)^{-1}\Phi(r)\,dr.
\end{equation}
\par

\subsection{Relation with Fourier coefficients}
Now, let $d$ be a positive integer such that $-d$ is
the discriminant of the imaginary quadratic field
$\Q(\sqrt{-d})$ and define
\begin{equation}\label{defsd}
 S = S(-d) = \begin{cases} \begin{pmatrix}
  \frac{d}{4} & 0\\
 0 & 1\\\end{pmatrix} & \text{ if } d\equiv 0\pmod{4}, \\[4ex]
 \begin{pmatrix} \frac{1+d}{4} & \frac12\\\frac12 & 1\\
 \end{pmatrix} & \text{ if } d\equiv 3\pmod{4}.\end{cases}
\end{equation}
\par
 Let $K = \Q(\sqrt{-d})$ and define the group $R$ as in the previous subsection. Let $N$ be a positive integer and let $F$ be an element of $S_k^{(2)}(N)$ with the Fourier expansion \begin{equation}\label{siegelfourierexpansion2}F(Z)
=\sum_{T \in \Lambda_2} a(F, T) e(\Tr(TZ)).\end{equation}
We define the adelization $\Phi_F$ of $F$ to be the function on $G(\A)$ given by
\begin{equation}\label{adelization}
\Phi_F(\gamma h_\infty k_0) =
  \mu_2(h_\infty)^k \det (J(h_\infty,
  i_2))^{-k}F(h_\infty\langle i_2\rangle),
\end{equation}
where $\gamma \in G(\Q)$, $h_\infty \in G(\R)^+$ and
$$
k_0 \in \prod_{ p< \infty} \Gamma_{0,p}(N),
$$
where the group
$
\Gamma_{0,p}(N)$ defined in~\eqref{Gamma0localdefeq}
is the local analogue of $\Gamma_0(N)$.

It is not hard to see that $\Phi_F \in \mathcal{A}_0(G(\Q)\bs G(\A), 1)$. For a symmetric matrix $T\in M_2^{\rm sym}(\Q)$, define
\begin{equation}\label{fouriercoefficientdefeq}
 (\Phi_F)_{T}(g)=\int\limits_{M_2^{\rm sym}(\Q)\backslash M_2^{\rm sym}(\A)}\psi^{-1}({\rm tr}(TX))\Phi_F(\mat{1}{X}{}{1}g)\,dX,\qquad g\in G(\A).
\end{equation}

\begin{lemma}\label{globalfourierlemma}
 Let $F\in S_k^{(2)}(N)$  and $\Phi_F$ as in (\ref{adelization}). Then, for all $g\in G(\R)$,
 \begin{equation}\label{globalfourierlemmaeq1}
  (\Phi_F)_T(g)= \begin{cases}\mu_2(g)^k\,\det J(g,i_2)^{-k}a(F,T)e({\rm tr}(TZ)) & \text{ if } g\in G(\R)^+ , \\ 0 & \text{ if } g\in G(\R)^-, \end{cases}
 \end{equation}
 where $Z=g\langle i_2\rangle$.
\end{lemma}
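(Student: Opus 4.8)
The plan is to unwind the adelization $\Phi_F$ directly against the definition of the Fourier coefficient $(\Phi_F)_T$ and match it term-by-term with the classical Fourier expansion \eqref{siegelfourierexpansion2}. First I would reduce to the case $g \in G(\R)^+$: if $g \in G(\R)^-$, then writing out the integrand $\Phi_F(\mat{1}{X}{}{1}g)$ requires decomposing $\mat{1}{X}{}{1}g$ as $\gamma h_\infty k_0$ with $h_\infty \in G(\R)^+$; but $\mat{1}{X}{}{1}$ has multiplier $1$ and lies in the connected component, so the sign of the multiplier of $g$ is unchanged, and since $G(\Q)$ elements used in the decomposition have rational (hence arbitrary-sign) multiplier while the $k_0$ part has $\Z_p^\times$-valued multiplier, one checks that no decomposition with $h_\infty \in G(\R)^+$ exists when $g \in G(\R)^-$ and $X$ ranges suitably — more precisely, $\Phi_F$ is supported on the set $G(\Q) G(\R)^+ \prod_p \Gamma_{0,p}(N)$, and one argues that $\mat{1}{X}{}{1}g$ for $g \in G(\R)^-$ never lands in this support (one can see this on the level of multipliers together with the fact that $M_2^{\rm sym}(\Q)$-translates don't help). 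Hence the integral vanishes, giving the second case.

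For $g \in G(\R)^+$, I would proceed as follows. Using the left $G(\Q)$-invariance and right $\prod_p \Gamma_{0,p}(N)$-invariance of $\Phi_F$, together with strong approximation for $\Sp_4$ (and the fact that the multiplier map is onto), rewrite $\Phi_F(\mat{1}{X}{}{1}g)$ for $X \in M_2^{\rm sym}(\A)$: decompose $X = X_\Q + X_\infty + X_{\rm fin}$ appropriately. The finite part of $\mat{1}{X}{}{1}$ lies in $\prod_p \Gamma_{0,p}(N)$ (since the upper-triangular unipotent is integral at every prime), so it is absorbed on the right; the rational part $X_\Q \in M_2^{\rm sym}(\Q)$ is absorbed on the left by $G(\Q)$, but it twists the character $\psi^{-1}(\Tr(TX))$ by $\psi^{-1}(\Tr(TX_\Q)) = 1$ since $\psi|_\Q = 1$. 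Thus the adelic integral over $M_2^{\rm sym}(\Q)\backslash M_2^{\rm sym}(\A)$ collapses to an integral over $M_2^{\rm sym}(\R)/M_2^{\rm sym}(\Z)$ (a compact torus, using that $\psi$ has conductor $\Z_p$ at each $p$ to kill the non-integral finite directions) of the archimedean quantity. At the archimedean place, $\Phi_F(\mat{1}{X_\infty}{}{1} g) = \mu_2(g)^k \det J(g, i_2)^{-k} F(\mat{1}{X_\infty}{}{1}g \langle i_2\rangle)$ using $\mu_2(\mat{1}{X_\infty}{}{1}) = 1$ and the cocycle property of $J$, and $\mat{1}{X_\infty}{}{1} g\langle i_2 \rangle = Z + X_\infty$ where $Z = g\langle i_2\rangle$. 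Substituting the Fourier expansion $F(Z + X_\infty) = \sum_{T'} a(F, T') e(\Tr(T'(Z+X_\infty)))$ and integrating $e(\Tr(T' X_\infty)) \psi_\infty^{-1}(\Tr(T X_\infty)) = e(\Tr((T' - T) X_\infty))$ over $M_2^{\rm sym}(\R)/M_2^{\rm sym}(\Z)$ picks out exactly the term $T' = T$ (by orthogonality of characters on the torus), leaving $\mu_2(g)^k \det J(g,i_2)^{-k} a(F,T) e(\Tr(TZ))$, as claimed.

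The main obstacle I anticipate is the bookkeeping in the first paragraph — making rigorous the claim that the integral vanishes on $G(\R)^-$ and, relatedly, carefully justifying the collapse of the adelic period to the real torus integral in the second paragraph. This requires being precise about the support of $\Phi_F$, about how $G(\Q)$ interacts with the decomposition $G(\A) = G(\Q) \cdot (G(\R)^+ \times \prod_p \Gamma_{0,p}(N))$ given by \eqref{adelization}, and about the fact that the unipotent $\mat{1}{X}{}{1}$ is integral at all finite places so that only its archimedean and rational components matter (the latter being killed by $\psi|_\Q = 1$). Once the domain of integration is correctly identified as the compact quotient $M_2^{\rm sym}(\R)/M_2^{\rm sym}(\Z)$, the remaining step is the standard Fourier-orthogonality computation, which is routine. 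One should also double check the normalizing powers of the multiplier and the automorphy factor match the conventions in \eqref{adelization} and \eqref{siegelfourierexpansion2}, but these are formal.
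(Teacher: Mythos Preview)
Your treatment of the $g\in G(\R)^+$ case is correct and is exactly the ``standard calculation'' the paper alludes to: reduce the adelic unipotent integral to a real torus integral via right $\prod_p\Gamma_{0,p}(N)$-invariance and the choice of fundamental domain $M_2^{\rm sym}([0,1))\times\prod_pM_2^{\rm sym}(\Z_p)$, then pick out the $T$-th coefficient by orthogonality.

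Your argument for $g\in G(\R)^-$, however, does not work. The claimed support restriction is empty: by strong approximation for $\Sp_4$ together with the decomposition $\A^\times=\Q^\times\cdot\R_{>0}\cdot\prod_p\Z_p^\times$ and the surjectivity of $\mu_2:\Gamma_{0,p}(N)\to\Z_p^\times$, one has $G(\A)=G(\Q)\,G(\R)^+\prod_p\Gamma_{0,p}(N)$. So $\Phi_F$ is defined and typically nonzero on elements coming from $G(\R)^-$; there is no support obstruction to exploit. The vanishing of $(\Phi_F)_T(g)$ for $g\in G(\R)^-$ instead comes from holomorphy of $F$. Concretely, take $\eta=\mathrm{diag}(-1,-1,1,1)\in G(\Q)$, which lies in every $\Gamma_{0,p}(N)$ and satisfies $\mu_2(\eta)=-1$ and $\eta^{-1}\mat{I}{X}{}{I}\eta=\mat{I}{-X}{}{I}$. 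Using left $G(\Q)$-invariance of $\Phi_F$ and the change of variable $X\mapsto -X$ one finds $(\Phi_F)_T(g)=(\Phi_F)_{-T}(\eta^{-1}g)$ with $\eta^{-1}g\in G(\R)^+$. Now run your own $G(\R)^+$ computation: the orthogonality integral picks out $a(F,-T)$, which is zero because the Fourier expansion \eqref{siegelfourierexpansion} of a cusp form is supported on positive-definite matrices. This is the missing idea; the rest of your write-up is fine.
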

\begin{proof}This is a standard calculation.\end{proof}
\begin{remark}A version of this lemma holds more generally for Siegel modular forms with respect to any congruence subgroup.

\end{remark}
Recall that $\Cl_K$ denotes the ideal
class group of $\field$. Let $(t_c)$, $c\in \Cl_K$, be coset
representatives such that
\begin{equation}\label{e:tcosetca1}
T(\A) = \bigsqcup_{c}\,t_cT(\Q)T(\R)\prod_{p<\infty}
(T(\Q_p) \cap \GL_2(\Z_p)),
\end{equation}
with $t_c \in \prod_{p<\infty} T(\Q_p)$. We can write
$$
t_c = \gamma_{c}m_{c}\kappa_{c}
$$
with $\gamma_{c} \in \GL_2(\Q)$, $m_{c} \in \GL^+_2(\R)$, and
$\kappa_{c}\in \prod_{p<\infty} \GL_2(\Z_p).$ By $(\gamma_{c})_f$ we denote the finite part of $\gamma_{c}$ when considered as an element of $\GL_2(\A)$, thus we have the equality $(\gamma_{c})_f=\gamma_{c}m_{c},$ as elements of $\GL_2(\A)$.

The matrices
$$
S_{c} = \det(\gamma_{c})^{-1}\ (\T{\gamma_{c}})S\gamma_{c}
$$
have discriminant $-d$ and form a set of representatives of the
$\SL_2(\Z)$-equivalence classes of primitive semi-integral positive
definite matrices of discriminant $-d$.

Choose $\chi$ to be a character of $T(\A)/T(\Q)T(\R)\prod_{p<\infty}
(T(\Q_p) \cap \GL_2(\Z_p))$; we identify $\chi$ with an ideal class group character of
$K$.

\begin{proposition}\label{classicalbesselperiodprop}Let $F \in S_k^{(2)}(N)$ and $S$, $\chi$, $\psi$ be as above. Then the Bessel period $B(\Phi_F)$ defined by~\eqref{defbessel} satisfies

\begin{equation}
 B({\Phi_F})= r \cdot e^{-2 \pi {\rm tr}(S)} \sum_{c \in \Cl_K} \chi(t_c)^{-1}a(F,S_c)
\end{equation}
 where the non-zero constant $r$ depends only on the normalization used for the Haar measure on  $ R(\A)$.

\end{proposition}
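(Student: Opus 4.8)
The plan is to unfold the adelic integral defining $B(\Phi_F)$ using the coset decomposition of $T(\A)$ and Lemma~\ref{globalfourierlemma}.

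\medskip

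First I would rewrite the quotient $\A^\times R(\Q)\bs R(\A)$ more explicitly. Since $R=TU$ with $U$ normal in $R$ and $R/U\cong T$, and since $\theta$ is trivial on $U(\Q)$, I can write the integral as an iterated integral: an outer integral over $\A^\times T(\Q)\bs T(\A)$ and an inner integral over $U(\Q)\bs U(\A)$ against $\theta^{-1}$. The inner integral, by definition~\eqref{fouriercoefficientdefeq}, produces exactly $(\Phi_F)_S(t)$ for $t\in T(\A)$ embedded in $G(\A)$ via~\eqref{embedding} (after matching up the definition of $\theta=\theta_S$ with the character $\psi(\mathrm{tr}(SX))$, noting $U(\A)$ is identified with $M_2^{\mathrm{sym}}(\A)$ and $U(\Q)\bs U(\A)\cong M_2^{\mathrm{sym}}(\Q)\bs M_2^{\mathrm{sym}}(\A)$). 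So the Bessel period becomes
$$
 B(\Phi_F) = \int_{\A^\times T(\Q)\bs T(\A)} \chi(t)^{-1}\,(\Phi_F)_S(t)\,dt.
$$

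\medskip

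Next I would insert the coset decomposition~\eqref{e:tcosetca1}, $T(\A) = \bigsqcup_c t_c T(\Q)T(\R)\prod_{p<\infty}(T(\Q_p)\cap\GL_2(\Z_p))$. Modulo $\A^\times T(\Q)$, the remaining integration is over $T(\R)/\R^\times$ (compact, since $K$ is imaginary quadratic so $T(\R)\cong\C^\times$ and $T(\R)/\R^\times$ is a circle) times the compact open group $\prod_p(T(\Q_p)\cap\GL_2(\Z_p))$ modulo its intersection with the center. Since $\chi$ is, by the stated choice, trivial on $T(\R)\prod_p(T(\Q_p)\cap\GL_2(\Z_p))$, the integrand $\chi(t)^{-1}(\Phi_F)_S(t)$ reduces on each coset to $\chi(t_c)^{-1}(\Phi_F)_S(t_c \cdot(\text{archimedean and integral part}))$. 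Now I use right-equivariance properties of $\Phi_F$: under $\prod_p(T(\Q_p)\cap\GL_2(\Z_p))\subset\prod_p\Gamma_{0,p}(N)$ the function $\Phi_F$ is right-invariant (here the hypothesis that the embedding~\eqref{embedding} lands inside $\prod_p\Gamma_{0,p}(N)$ is used — this works because $T(\Q_p)\cap\GL_2(\Z_p)$ embeds with lower-left block zero), so only the archimedean variable and the representative $\gamma_c$ survive. Writing $t_c=\gamma_c m_c\kappa_c$ and using $\Phi_F(\gamma h_\infty k_0)$-formula~\eqref{adelization} together with Lemma~\ref{globalfourierlemma}, the finite part $\gamma_c$ moves to the left (it is in $G(\Q)$), leaving $(\Phi_F)_S$ evaluated at $m_c\in\GL_2^+(\R)$ times an archimedean element; this evaluates via~\eqref{globalfourierlemmaeq1} to a multiple of $a(F, \T{\gamma_c}S\gamma_c/\det\gamma_c) = a(F,S_c)$, times $e(\mathrm{tr}(S_c Z))$ evaluated at the relevant point. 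Collecting the archimedean exponential and Jacobian factors, the point $Z$ works out so that $e(\mathrm{tr}(\cdot))$ contributes the common factor $e^{-2\pi\,\mathrm{tr}(S)}$ (the dependence on $c$ cancels because $S_c$ and the archimedean rotation are matched); the leftover volume of the compact quotient is the constant $r$, which depends only on the Haar measure normalization. Summing over $c\in\Cl_K$ gives the claimed formula.

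\medskip

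The main obstacle I expect is the bookkeeping at the archimedean place: tracking precisely how the automorphy factor $\mu_2(h_\infty)^k\det J(h_\infty,i_2)^{-k}$ in~\eqref{adelization} combines with Lemma~\ref{globalfourierlemma} and the change of variables $S\mapsto S_c$ to yield a factor independent of $c$, so that $e^{-2\pi\,\mathrm{tr}(S)}$ (rather than $e^{-2\pi\,\mathrm{tr}(S_c)}$) comes out uniformly. This amounts to checking that $\mathrm{tr}(S_c\,m_c^{-1}\overline{m_c^{-1}}^{\,t}\cdots)$ or the analogous quantity equals $\mathrm{tr}(S)$ — essentially because $m_c$ is chosen so that $t_c$ stabilizes $S$ up to similitude in the sense that $\T{(\gamma_c m_c)}S(\gamma_c m_c)=\det(\gamma_c m_c)S$, forcing the archimedean evaluation point to be the $T(\R)$-translate of $i_2$ on which $e(\mathrm{tr}(S\cdot))$ is constant. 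Once that normalization is pinned down, everything else is the standard unfolding, and absorbing all measure-dependent constants into $r$ finishes the proof.
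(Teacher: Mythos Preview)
Your approach is essentially the same as the paper's: unfold the $U$-integral to obtain $(\Phi_F)_S$, use the coset decomposition~\eqref{e:tcosetca1}, and apply Lemma~\ref{globalfourierlemma}. The difference is in how the representative $\gamma_c$ is handled. The paper introduces an auxiliary Siegel cusp form $G(Z)=F(\widetilde{\gamma_c^{-1}}\langle Z\rangle)$ and shows $\Phi_F(h\,\widetilde{(\gamma_c)_f})=\Phi_G(h)$; this reduces the inner integral to $(\Phi_G)_S(t_\infty)$ with the \emph{original} matrix $S$ and with $t_\infty\in T_S(\R)$, so Lemma~\ref{globalfourierlemma} immediately gives $a(G,S)\,e^{-2\pi\,\mathrm{tr}(S)}$ and the $c$-independence of the exponential is automatic (one only needs $a(G,S)=a(F,S_c)$). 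Your route --- moving $\widetilde{\gamma_c}$ across the unipotent integral so that $(\Phi_F)_S(\widetilde{\gamma_c}\,g)=(\Phi_F)_{S_c}(g)$ --- also works, but the justification you give for the archimedean cancellation is not quite right: it is not true that $\gamma_c m_c\in T$ (since $\kappa_c$ need not lie in $T$). The correct observation is simply that $t_c$ has trivial archimedean component, so $m_c=\gamma_c^{-1}$ as real matrices; then with $g=m_c t_\infty=\gamma_c^{-1}t_\infty$ one computes directly
\[
\mathrm{tr}\bigl(S_c\cdot\widetilde{g}\langle i_2\rangle\bigr)
=\frac{i}{\det g}\,\mathrm{tr}(S_c\,g\,{}^t g)
=\frac{i}{\det t_\infty}\,\mathrm{tr}(S\,t_\infty\,{}^t t_\infty)
=i\,\mathrm{tr}(S),
\]
using $S_c=\det(\gamma_c)^{-1}\,{}^t\gamma_c S\gamma_c$ and the fact that $t_\infty\in T_S(\R)$. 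The automorphy factor $\mu_2(\widetilde{g})^k\det J(\widetilde{g},i_2)^{-k}$ equals $1$ for any $g\in\GL_2^+(\R)$ under the embedding~\eqref{embedding}, so the integrand is the constant $a(F,S_c)\,e^{-2\pi\,\mathrm{tr}(S)}$ and the remaining compact volume gives~$r$. With this correction your argument is complete; the paper's auxiliary-form trick just packages the same computation more cleanly.
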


\begin{proof}Note that
$$
  B({\Phi_F})=\int_{\A^\times T(\Q) \backslash T(\A)}(\Phi_F)_S(t)\chi^{-1}(t)\,dt.
$$
By the coset decomposition~\eqref{e:tcosetca1}, we get (up to a non-zero constant coming from the Haar measure)
$$
   B({\Phi_F})= \sum_{c \in \Cl_K} \chi(t_c)^{-1}\int_{\R^\times \bs T(\R)}(\Phi_F)_S(t_ct_\infty)\,dt_\infty.
$$
Let us compute the inner integral. Note that $T(\R) = \C^\times$. For $g \in \GL_2$, let $\widetilde{g} =  \begin{pmatrix}
g & 0\\
0 &\!\det(g)\, \T{g^{-1}}
\end{pmatrix}$. We have
\begin{align*}
 \int_{\R^\times \bs T(\R)}(\Phi_F)_S(t_ct_\infty)\,dt_\infty
  &=\int_{\R^\times \bs T(\R)}(\Phi_F)_S(\gamma_cm_ct_\infty)\,dt_\infty\\
  &=\int_{\R^\times \bs T(\R)}(\Phi_F)_S(t_\infty  \widetilde{(\gamma_c)_f})\,dt_\infty.
\end{align*}
Put
$$
 G(Z)  = F(\gamma_c^{-1} Z\,\T{\gamma_c}^{-1} \det(\gamma_c)) = F(\widetilde{\gamma_c^{-1}}\langle Z \rangle).
$$
It is not hard to check that $G$ is a Siegel cusp form on some congruence subgroup of $\Sp_4(\Z)$. We claim that $\Phi_F(h \widetilde{(\gamma_c)_f}) = \Phi_G(h)$ for $h \in G(\A)$. By strong approximation, it suffices to prove this for $h \in G(\R)^+$. This follows from the following calculation,
\begin{align*}\Phi_F(h \widetilde{(\gamma_c)_f}) &= \Phi_F(\widetilde{m_c}h)\\&=  \mu_2(h)^k \det J((h,
  i_2))^{-k}F(\widetilde{m_c}h\langle i\rangle) \\ &= \mu_2(h)^k\det J((h,
  i_2))^{-k}F (\widetilde{\gamma_c^{-1}}h\langle i\rangle) \\ &=\mu_2(h)^k \det J((h,
  i_2))^{-k}G (h\langle i\rangle) \\&= \Phi_G(h).
\end{align*}
Thus we conclude
$$ \int_{\R^\times \bs T(\R)}(\Phi_F)_S(t_ct_\infty ) dt_\infty=\int_{\R^\times \bs T(\R)}(\Phi_G)_S(t_\infty ) dt_\infty.$$ The desired result now follows from Lemma~\ref{globalfourierlemma} and the simple observation that $a(G, S) = a(F,S_c)$.

\end{proof}

\subsection{Simultaneous non-vanishing of $L$-values}
We will now prove Theorem~\ref{th:simulnonvan}. Let $f$, $g$ be as in the statement of the theorem. In the case that $f$ and $g$ are multiples of each other, the theorem is known; indeed a stronger version easily follows from recent work of Munshi~\cite[Corollary 1]{Munshi}. So we may assume that $f$ and $g$ are not multiples of each other.

 Let $M_1$ be any divisor of $M$ with an odd number of prime factors. Let $D$ be the definite quaternion algebra over $\Q$ ramified exactly at (infinity and) the primes dividing $M_1$. Let $\pi'_f$ (resp.\ $\pi'_g$) be the Jacquet-Langlands transfer of $\pi_f$ (resp.\ $\pi_g$) to $D^\times_\A$. Using Proposition~\ref{classicalyoshidaprop}, we construct a non-zero Siegel cusp form $F_{f,g} \in S_{k+1}^{(2)}(N)$ whose adelization generates an irreducible cuspidal representation $\Pi_{f,g}$ of $G(\A)$ such that $\Pi_{f,g}$ is the global theta lift from $(D^\times_\A \times  D^\times_\A)/\A^\times\cong{\rm GSO}(D_\A) $ to $\GSp_4(\A)$ of the representation $\pi'_f \boxtimes \pi'_g$.

By assertion iii) of Proposition~\ref{classicalyoshidaprop},  $F_{f,g}$ is an eigenfunction of the operator $U(p)$ for all $p|N$. So all the required conditions for Theorem~\ref{th:nonvanfouriersiegel} are satisfied. Let $d$ be an odd squarefree integer such that there exists $T \in \Lambda_2$ with $d=-\disc(T)$ and $a(F_{f,g},T)\ne 0$. Put $K=\Q(\sqrt{-d})$. In light of Theorem~\ref{th:nonvanfouriersiegel}, it is clear that Theorem~\ref{th:simulnonvan} will be proved if we can show that for any such $d$ there exists a character $\chi \in  \widehat{\Cl_K}$ such that $L(\frac12, \pi_f \times \theta_\chi) \ne 0$ and $L(\frac12, \pi_g \times \theta_\chi) \ne 0$.

The key result which enables us to do this is the following theorem of Prasad and Takloo-Bighash as applied to our setup. We refer the reader to~\cite[Theorem 3]{prasadbighash} for the full statement. Note that by the adjointness property (proved in a much more general setting in Proposition 3.1 of \cite{PR}), for any automorphic representation $\pi$ of $D^\times(\A)$ we have the equality $$L(\frac12, \pi \times \theta_\chi) = L(\frac12, \bc_{K}(\pi) \times \chi),$$ where $\bc$ denotes base-change.

\begin{thm}[Prasad -- Takloo-Bighash]Let $D$ be a quaternion algebra and $\pi_1$, $\pi_2$ be two automorphic representations of $D^\times(\A)$ with trivial central characters. Consider $\pi_1\boxtimes \pi_2$ as an automorphic representation on the group ${\rm GSO}(D_\A) = (D^\times_\A \times  D^\times_\A)/\A^\times $ and let $\Pi$ be its theta lift to $G(\A)$. Let $d$ be an integer such that $-d$ is the discriminant of the imaginary quadratic field $K=\Q(\sqrt{-d})$ and define $S$ by~\eqref{defsd}. Let the additive character $\psi$ and the groups $T$, $R$ be defined as in Section~\ref{s:besselperiod} and let $\chi$ be a character on $T(\A)/T(\Q)$ such that $\chi |_{\A^\times}= 1$.
Then, if the linear functional on $\Pi$ given by the period integral $$\Phi \mapsto B_{S, \chi, \psi}(\Phi)$$ as defined in~\eqref{defbessel} is not identically zero, then $L(\frac12, \pi_1 \times \theta_{\chi^{-1}}) \ne 0$ and $L(\frac12, \pi_2 \times \theta_{\chi^{-1}}) \ne 0$.
\end{thm}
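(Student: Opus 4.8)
This is \cite[Theorem 3]{prasadbighash} in the situation at hand; here is the strategy I would follow. It combines the theta correspondence for the dual pair $(\GSp_4,{\rm GO}(D))$ with Waldspurger's formula for toric periods. Since $\Pi$ is the theta lift of $\pi_1\boxtimes\pi_2$ from ${\rm GSO}(D_\A)$, a vector $\Phi\in\Pi$ can be written as
\[
 \Phi(g)=\int_{{\rm GO}(D)(\Q)\bs{\rm GO}(D)(\A)}\Theta(g,h;\varphi)\,F(h)\,dh,
\]
where $\Theta(\cdot,\cdot;\varphi)$ is the (similitude) theta kernel attached to a Schwartz--Bruhat function $\varphi$ on $D_\A^2$ and $F$ lies in the automorphic representation of ${\rm GO}(D_\A)$ induced from $\pi_1\boxtimes\pi_2$ on ${\rm GSO}(D_\A)$. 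Substituting this into~\eqref{defbessel} and integrating first over the unipotent subgroup $U$ of $R=TU$, the additive character $\theta=\theta_S$ extracts from the sum defining the theta kernel precisely the pairs of vectors of $D$ with (semi-integral) Gram matrix $S$; by Witt's theorem these form a single orbit under ${\rm O}(D)(\Q)$, naturally identified with the set of isometric embeddings of the binary quadratic space $(\Q^2,S)$ --- equivalently, since $\disc(S)=-d$, with the set of $\Q$-algebra embeddings of $K=\Q(\sqrt{-d})$ --- into $D$. If no such embedding exists the orbit is empty, so $B_{S,\chi,\psi}$ vanishes identically on $\Pi$ and the statement is vacuous; otherwise fix an embedding $\iota_0\colon K\hookrightarrow D$.

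After this $U$-integration, the remaining integrations --- over ${\rm GO}(D_\A)$, which unfolds the orbital sum, and over the torus $T=T_S$, whose adelic points are identified with $\A_K^\times$ via~\eqref{e:L} --- collapse $B(\Phi)$, after a delicate manipulation of the Weil-representation data, to a period of $F$ along the subtorus $(\A_K^\times\times\A_K^\times)/\A^\times$ of ${\rm GSO}(D_\A)\cong(D_\A^\times\times D_\A^\times)/\A^\times$ (the identification coming from~\eqref{GSOexacteq}) embedded via $(\iota_0,\iota_0)$, integrated against the character that is $\chi^{-1}$ on each $\A_K^\times$-factor; that one gets $\chi^{-1}$ on \emph{both} factors uses that $\chi$ is anticyclotomic (a consequence of $\chi|_{\A^\times}=1$) together with the triviality of the central characters. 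Since the restriction of $F$ to ${\rm GSO}(D_\A)$ lies in $\pi_1\boxtimes\pi_2$ and the torus character factors through the two $\A_K^\times$-factors, evaluating on a pure tensor $\phi_1\otimes\phi_2$ with $\phi_i\in\pi_i$ makes this period into the product of toric periods
\[
 \Big(\int_{\A^\times K^\times\bs\A_K^\times}\phi_1(t)\,\chi^{-1}(t)\,dt\Big)\Big(\int_{\A^\times K^\times\bs\A_K^\times}\phi_2(t)\,\chi^{-1}(t)\,dt\Big)
\]
up to a nonzero constant depending only on $\varphi$ and the Haar measures. Consequently, if $\Phi\mapsto B_{S,\chi,\psi}(\Phi)$ is not identically zero on $\Pi$, then for a suitable choice of $\varphi$ and of $\phi_i\in\pi_i$ both toric periods above are nonzero.

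It then suffices to invoke Waldspurger's theorem on toric periods, valid for automorphic forms on $D_\A^\times$ for any quaternion algebra $D$: the non-vanishing of $\int_{\A^\times K^\times\bs\A_K^\times}\phi_i(t)\,\chi^{-1}(t)\,dt$ for some $\phi_i\in\pi_i$ forces $L(\frac12,\bc_{K}(\pi_i)\times\chi^{-1})\neq0$, which by the adjointness of base change recalled just above equals $L(\frac12,\pi_i\times\theta_{\chi^{-1}})$. Applying this for $i=1$ and $i=2$ proves the theorem.

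The step I expect to be the main obstacle is the explicit evaluation of the $U$-integral of the theta kernel together with the ensuing unfolding: one must follow the Weil-representation data precisely enough to verify that (i) the $\theta_S$-integration really does single out the embeddings of $(\Q^2,S)$ into $D$, (ii) the unfolding and the $T_S$-integration together yield exactly the torus $(\A_K^\times\times\A_K^\times)/\A^\times$ rather than a larger or smaller subgroup, and (iii) the two characters on its $\A_K^\times$-factors are each precisely $\chi^{-1}$ and not some other twist. By comparison, the convergence of the integrals and the interchange of the order of integration are routine, cuspidality of $\pi_1$ and $\pi_2$ being sufficient.
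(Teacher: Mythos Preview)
Your proposal is correct and matches the paper's treatment exactly. The paper does not give its own proof of this statement; it cites \cite[Theorem 3]{prasadbighash} and offers only a two-sentence remark sketching the method --- pull back the Bessel period via the theta correspondence to ${\rm GSO}(D_\A)$, obtain a product of two toric periods on $\pi_1$ and $\pi_2$, and apply Waldspurger's formula --- which is precisely the strategy you have fleshed out in detail.
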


\begin{remark}The proof of the above theorem involves pulling back the Bessel period via theta-correspondence to ${\rm GSO}(D_\A)$. This equals a product of two toric periods on $\pi_1$ and $\pi_2$, which by Waldspurger's formula equals the product of central $L$-values. Takloo-Bighash has communicated to one of the authors that this procedure is originally due to Furusawa. \end{remark}

Now, since there exists $T \in \Lambda_2$ with $d=-\disc(T)$ and $a(F_{f,g},T)\ne 0$, it follows that we can pick a character $\chi \in \widehat{\Cl_K}$ such that $$\sum_{c \in \Cl_K} \chi(t_c)^{-1}a(F_{f,g},S_c) \ne 0.$$ For this choice of $\chi$, the Bessel period $B(\Phi_{F_{f,g}})$ is non-zero by Proposition~\ref{classicalbesselperiodprop}. Since $\Phi_{F_{f,g}}$ is a vector in $\Pi_{f,g}$ it follows that the the linear functional on $\Pi_{f,g}$ given by the period integral $$\Phi \mapsto B_{S, \chi, \psi}(\Phi)$$ is not identically zero. So, by the theorem of Prasad and Takloo-Bighash stated above,  $L(\frac12, \pi_f \times \theta_{\chi^{-1}}) = L(\frac12, \pi_f' \times \theta_{\chi^{-1}}) \ne 0$ and $L(\frac12, \pi_g \times \theta_{\chi^{-1}}) = L(\frac12, \pi_g' \times \theta_{\chi^{-1}}) \ne 0$. The proof of Theorem~\ref{th:simulnonvan} is complete.

\bibliography{yoshida-values}

\end{document}